\numberwithin{figure}{section}
\numberwithin{table}{section}
\numberwithin{figure}{subsection}
\numberwithin{table}{subsection}
\numberwithin{equation}{section}
\thanks{Department of Mathematics, The Chinese University of Hong Kong, Hong Kong SAR.}
\title{Learning Rays via Deep Neural Network in a Ray-based IPDG Method for High-Frequency Helmholtz Equations in Inhomogeneous Media\thanks{%
Received... Accepted... Published online on... Recommended by....
% optional here: Work supported by ...
}}
\author{Tak Shing Au Yeung\footnotemark[4]
	\and Ka Chun Cheung\footnotemark[3]
        \and Eric T. Chung\footnotemark[2]
        \and Shubin Fu\footnotemark[6]
        \and Jianliang Qian\footnotemark[5]}
\begin{document}

\maketitle

\renewcommand{\thefootnote}{\fnsymbol{footnote}}

\footnotetext[2]{Department of Mathematics,The Chinese University of Hong Kong, Hong Kong Special Administrative Region.}
\footnotetext[3]{NVIDIA AI Technology Center, NVIDIA, Hong Kong SAR}
\footnotetext[4]{Department of Mathematics,The Chinese University of Hong Kong, Hong Kong Special Administrative Region. and NVIDIA AI Technology Center, NVIDIA, Hong Kong SAR}
\footnotetext[5]{Department of Mathematics, Michigan State University, East Lansing, MI 48824 USA. Email: jqian@msu.edu. Qian's research is partially supported by NSF (grant no. 1614566 and 2012046).}
\footnotetext[6]{Department of Mathematics, University of Wisconsin-Madison, Madison, WI, USA}

\begin{abstract}
We develop a deep learning approach to extract ray directions at discrete locations by analyzing highly oscillatory wave fields. A deep neural network is trained on a set of local plane-wave fields to predict ray directions at discrete locations. The resulting deep neural network is then applied to a reduced-frequency Helmholtz solution to extract the directions, which are further incorporated into a ray-based interior-penalty discontinuous Galerkin (IPDG) method to solve the Helmholtz equations at higher frequencies. In this way, we observe no apparent pollution effects in the resulting Helmholtz solutions in inhomogeneous media. Our 2D and 3D numerical results show that the proposed scheme is very efficient and yields highly accurate solutions.
\end{abstract}

\section{Introduction}
The high-frequency Helmholtz equation is numerically hard to solve. The Shannon's sampling
principle \cite{Shannon1998} states that a necessary condition to solve the high-frequency Helmholtz equation is that the mesh size $h$ and frequency $\omega$ satisfies the relationship: $h=\mathcal{O}\left(\omega^{-1}\right)$. Thus, if the ambient dimension of the Helmholtz equation is $d$, the degrees of freedom (DOFS) is $\mathcal{O}\left(\omega^{d}\right)$. That means that the Helmholtz equation needs a large complexity to solve if the frequency is high. However, this complexity is difficult to achieve numerically. The difficulty is mainly due to the pollution effect in error estimates for finite element methods \cite{BABUSKA1995325, Babuska1997, Ihlenburg1997SolutionOH}. The pollution effect states that the ratio between the numerical error and the best approximation error from a discrete finite element space is $\omega$ dependent. This will lead to a difficulty in developing an accurate and stable discretization when the frequency $\omega$ is high and the above relation $h=\mathcal{O}\left(\omega^{-1}\right)$ is maintained. In this paper, inspired by ray theory and related micro-local analysis theory, we develop a deep learning approach to extract ray directions from a reduced-frequency Helmholtz solution, which are further incorporated into an IPDG method to solve the high-frequency Helmholtz equation in inhomogeneous media, leading to a new IPDG method with no apparent pollution effect for Helmholtz equations. 

Ray theory provides a powerful asymptotic method for treating high-frequency wave phenomena \cite{lax57,avikel63,bab65}. Microlocal analysis is built upon ray theory but is much further developed \cite{zwo12}. In mathematical analysis, microlocal analysis consists of Fourier-transform related techniques for analyzing variable-coefficient partial differential equations, including Fourier integral operators, wavefront sets, and oscillatory integral operators, so that such analysis allows localized scrutiny not only with respect to location in space but also with respect to cotangent directions at a given point \cite{zwo12}. Since wave singularities propagate along characteristics, applying microlocal analysis to spatial wave fields will reveal cotangent-space related characteristic (or ray) information at each spatial point \cite{bencolrun04,qiayin10a,qiayin10b,bencolmar13}. Moreover, such localized ray information can be incorporated into a finite-element basis so that one can design effective numerical methods to solve wave equations 
\cite{Fang2016LearningDW, Fang2018AHA, CHUNG2017660, Betcke2012ApproximationBD,lam2019numerical}. 

The notion of numerical microlocal analysis method was first proposed in \cite{bencolrun04}. Assuming that the to-be-processed data are solutions of Helmholtz equations, the authors in \cite{bencolrun04} designed a Jacobi-Anger expansion and Fourier-transform based plane-wave analysis method to process Dirichlet observables collected on a sphere around each to-be-analyzed point. Later, authors in \cite{lantantsa11} improved the method in \cite{bencolrun04} by using $L^1$ minimization instead of Tikhonov regularization to obtain much less noise-sensitive results. 
To overcome stability issues and improve accuracy in identified ray directions, the method in \cite{bencolrun04} was further developed in \cite{bencolmar13} to analyze impedance observables in a similar setup; to deal with multiple plane waves or point sources arriving at an observation point, the authors of \cite{bencolmar13} further developed a decomposition filter with Gaussian weights. The NMLA method is used for numerically and locally finding crossing rays and their directions from samples of wave-fields \cite{bencolrun04,bencolmar13}. Comparing to other methods, such as the Prony's method \cite{carmos92} and the matrix pencil method\cite{huasar90}, that perform similar tasks, the NMLA is simpler and more robust. 

In comparison to the approaches in \cite{bencolrun04,bencolmar13}, the NMLA method in \cite{lam2019numerical} is much straightforward and easy to implement in the sense that fast Gaussian wavepacket transforms are applied directly to the given oscillatory wavefield, where the method neither assumes the underlying model being Helmholtz nor preprocesses the input data into Dirichlet or impedance data on a certain sphere around an observation point, and the relevant ray directions are encoded into cotangent directions in terms of coefficients of Gaussian wavepacket expansions. 

In the above works on numerical microlocal analysis \cite{bencolrun04,bencolmar13,lam2019numerical}, ray directions are extracted via hard-core numerical analysis. Motivated by recent development in deep learning and related computational methodologies \cite{yeung2020deep,wang2020deep,wang2020reduced,cheung2020deep}, 
we develop a deep learning approach to train a deep neural network (DNN) on a set of local plane-wave fields to predict ray directions at discrete locations, resulting in DNN based microlocal analysis method (DNN-MLA).  Our deep neural network (DNN) based ray-direction extraction method provides a nonlinear parametrized ``solution operator'' for mapping a highly oscillatory wave field into ray directions, once the DNN is trained on a set of plane waves and corresponding  ray directions. We emphasize that our new method of extracting ray directions does not require the input training oscillatory data to be Helmholtz solutions, which is similar to the method in \cite{lam2019numerical}.  

To solve high-frequency Helmholtz equations, we further apply the DNN-MLA method to a reduced-frequency Helmholtz solution to extract ray directions, which are further incorporated into an IPDG method to solve the high-frequency Helmholtz equation. This is our first contribution. 

Our second contribution is to provide an error analysis for the newly developed ray-IPDG method. The theorem indicates that in the high-frequency regime when the frequency parameter $\omega$ is large, the $L^2$ error of the numerical solution is dominated by the mesh size and the approximation error in ray directions. 

\subsection{The high-frequency Helmholtz problem}
Let $\omega>0$ be the frequency and $\Omega \subset \mathbb{R}^{d}$, for $d=2$ or $3$, be the computational domain, where $d$ is the dimension. Our goal is to find the unknown wave field $u$ such that
\begin{align}
\label{eq:helmholtz1}
-\nabla^{2} u-(\omega / c)^{2} u=f, \quad \text { in } \Omega,
\end{align}
for which we may impose impedance boundary conditions, Cauchy conditions or perfectly matched layer (PML)
boundary conditions. Here the wave speed $c$ is a smooth function with positive lower bound $c_{min}$ and upper bound $c_{max}$, and $f \in L^{2}(\Omega)$ is the source function.
% Let $k=\omega / c$.
%\subsection{Geometric optics and probing}
We will apply the idea of ``probing" from \cite{Fang2016LearningDW} for solving the high-frequency Helmholtz problem. Let $x \in \Omega$ and $f=0$. We consider the following geometric optics ansatz (c.f. \cite{Harold1925,marchand1966electromagnetic,Rayleigh1912,avikel63,Qian2001}) for the Helmholtz equation
\begin{align*}
u(\mathbf{x})=\text { superposition of }\left\{A_{n}(\mathbf{x}) e^{i \omega \phi_{n}(\mathbf{x})}\right\}_{n=1}^{N}+\mathcal{O}\left(\omega^{-1}\right), 
\end{align*}
where $N$ is the number of wavefronts passing through each point, $A_{n}$ and $\phi_{n}$ are respectively the amplitude and phase functions. Note that the phase function satisfies the Eikonal equation $|\nabla \phi_n| = c^{-1}$.
Throughout the paper, we will assume that $N$ is the same at all points. In particular, this means that we assume there are $N$ dominant wavefronts at each point.
The functions $A_{n}$ and $\phi_{n}$ are independent of the frequency $\omega,$ but depend on the wave speed $c(x)$. 
We will assume that the functions $A_{n}$ and $\phi_{n}$ are locally smooth. 
Consider a point $x_{0} \in \Omega$ in the computational domain.
The Taylor expansion of each $\phi_{n}$ for $\left|\mathrm{x}-\mathrm{x}_{0}\right|<h \ll 1$ is given by 
\begin{align*}
\phi_{n}(\mathrm{x})=\phi_{n}\left(\mathrm{x}_{0}\right)+\left|\nabla \phi_{n}\left(\mathrm{x}_{0}\right)\right| \mathbf{d}_{n} \cdot\left(\mathrm{x}-\mathrm{x}_{0}\right)+\mathcal{O}\left(h^{2}\right), 
\end{align*}
where $\mathbf{d}_{n} :=\frac{\nabla \phi_{n}\left(\mathrm{x}_{0}\right)}{\left|\nabla\phi_{n}\left(\mathrm{x}_{0}\right)\right|}$ is the ray direction.  Similarly, the Taylor expansion of $A_{n}$ gives
\begin{align*}
A_{n}(\mathrm{x})=A_{n}\left(\mathrm{x}_{0}\right)+\nabla A_{n}\left(\mathrm{x}_{0}\right) \cdot\left(\mathrm{x}-\mathrm{x}_{0}\right)+\mathcal{O}\left(h^{2}\right). 
\end{align*}
Hence, each wave component in the solution $u(\mathrm{x})$ can be written as 
\begin{align*}
A_{n}(\mathrm{x}) e^{i \omega \phi_{n}(\mathrm{x})}=B_{n}\left(\mathrm{x}-\mathrm{x}_{0}\right) e^{i\left(\omega / c\left(\mathrm{x}_{0}\right)\right) \mathbf{d}_{n} \cdot\left(\mathrm{x}-\mathrm{x}_{0}\right)}+\mathcal{O}\left(h^{2}+\omega h^{2}+\omega^{-1}\right), 
\end{align*}
where $B_{n}(\mathrm{x})=e^{i \omega \phi_{n}\left(\mathrm{x}_{0}\right)}\left(A\left(\mathrm{x}_{0}\right)+\nabla A_{n}\left(\mathrm{x}_{0}\right) \cdot \mathrm{x}\right)$ is a linear function. 
By taking $h \sim \mathcal{O}\left(\omega^{-1}\right),$ we see that $u(\mathrm{x})$ can be approximated by superposition of products of a linear function and a plane wave with an error of $\mathcal{O}\left(\omega^{-1}\right) .$ This motivates us to use products of bilinear functions with $e^{i \omega / c\left(\mathrm{x}_{0}\right)  \mathbf{d}_{n} \cdot\left(\mathrm{x}-\mathrm{x}_{0}\right)}$ as local basis.

\subsection{Probing of ray directions}
To solve the high frequency Helmholtz equation (\ref{eq:helmholtz1}), the above discussion motivates the use of
functions $e^{i \omega / c\left(\mathrm{x}_{0}\right)  \mathbf{d}_{n} \cdot\left(x-x_{0}\right)}$ as local basis. 
Thus, the ray-based IPDG method \cite{CHUNG2017660} will be used for solving the high frequency Helmholtz equation. 
The most important step is to determine the local ray directions $\mathbf{d}_n$. To do so, we need to compute the solution of a reduced frequency Helmholtz equation 
\begin{align}
\label{eq:redhelmholtz1}
-\nabla^{2} \tilde{u}-(\tilde{\omega} / c)^{2} \tilde{u}=f \quad\quad \text { in } \Omega,
\end{align}
where $\tilde{\omega}<\omega$ is a reduced frequency. After having this reduced frequency solution, we may use the Gaussian wavepacket transform based NMLA method to find the ray directions from the reduced frequency solution as proposed in \cite{lam2019numerical}. But in this paper, we propose a deep learning approach to extract those ray directions. Finally, we use the computed ray directions to form the local basis for the ray-based IPDG method to solve the high frequency Helmholtz equation. We summarize the steps as follows:
\begin{enumerate}
\item Use the standard IPDG method to solve the reduced-frequency Helmholtz equation; 
\item Use a deep learning or NMLA method to compute ray directions;
\item Use the computed ray directions to form the basis for the Ray-IPDG method;
\item Use the Ray-IPDG method to solve the high-frequency Helmholtz equation.
\end{enumerate}

In order to solve the high frequency Helmholtz equation to a certain accuracy, our goal is to develop a ray-based IPDG method to achieve this, and further more the ray-based IPDG method will use much less computational time and cost than the standard IPDG method does.

\subsection{Related works}
In a recent survey \cite{hipmoiper15}, the authors have given a quite comprehensive review of construction and properties of Trefftz variational methods for the Helmholtz equation. Since such methods use oscillating basis functions in the trial spaces, they may achieve better approximation properties than classical piece-wise polynomial spaces. So far, as stated in \cite{hipmoiper15}, it is hard to make unequivocal statements about the merits of {\it exact Trefftz} methods in that theory developed in the literature such as \cite{hipmoiper11, hipmoiper15b, githipper09} fails to provide information about the crucial issue of $\omega$-robust accuracy with $\omega$-independent cost, and these methods provide no escape from the pollution error. 

Since Trefftz finite-element methods require test and trial functions to be exact local solutions of the Helmholtz equation, these methods are able to easily deal with discontinuous and piece-wise constant wave speeds. However, when the wave speed is smoothly varying, in general there are no exact analytical solutions for the underlying Helmholtz equation so that no analytical Trefftz functions are available either. Therefore, approximate Trefftz functions are appealing for problems with smoothly varying wave speeds; see \cite{Betcke2012ApproximationBD} for ray-based modulated plane-wave discontinuous Galerkin methods and \cite{imbdes14} for generalized plane-wave numerical methods, which are two examples of such approximate Trefftz methods. 

As stated in \cite{hipmoiper15}, the policy of incorporating local direction of rays is particularly attractive for plane-wave based {\it approximate Trefftz} methods, since plane-wave basis functions naturally encode a direction of propagation, and overall accuracy may benefit significantly from a priori directional adaptivity \cite{nguperreicoc15,lamshu17,gilkel01}; moreover, the survey \cite{hipmoiper15} also remarks that this strategy appears as the most promising way to achieve $\omega$-uniform accuracy with numbers of degrees of freedom that remain $\omega$-uniformly bounded or display only moderate growth 
as $\omega\rightarrow\infty.$  On the one hand, the methods in \cite{nguperreicoc15,gilkel01} are able to incorporate ray directions only when the underlying geometry is simple and the wave speed is constant, in which the resulting ray directions can be computed on the fly; on the other hand, the works in \cite{Fang2016LearningDW, Fang2018AHA, CHUNG2017660, Betcke2012ApproximationBD,lam2019numerical} have developed such ray-based plane-wave methods for smoothly varying wave speeds, in which ray directions are obtained a priori in some ingenious ways. 

From this perspective, the method proposed in this article can be viewed as a plane-wave based approximate Trefftz method as well in that we develop a versatile approach to obtain ray directions from highly oscillatory wave fields by carrying out numerical microlocal analysis via a deep neural network and further incorporate these directions into an IPDG method. 

\section{The ray-based IPDG method}

We will present our ray-based method in this section. In Section \ref{sec:method}, we will present the variational formulation
and the approximation space. In Section \ref{sec:approx}, we will give an error estimate on using our basis functions
to approximate the solution. 

\subsection{Method description}
\label{sec:method}

We let $\Omega=[0,1]^{d}$ be the computational domain. 
We consider a uniform partition, denoted as $\mathcal{T}_H$, of the domain $\Omega$ with mesh size $H$.
For each element $K$, we further consider a set of nodal points $\{\hat{\mathrm{x}}_{l,K}\}_{l=1}^L$,
where $L$ is the total number of nodal points within $K$. 
We will use these $L$ points to define the basis functions for each element $K$.
We define $\mathcal{F}_H, \mathcal{F}_H^{I}$ and $\mathcal{F}_H^{B}$ to be respectively the set of all faces, interior faces and boundary faces of the partition $\mathcal{T}_H$. We also define $N_E$ to be the number of coarse elements. 

Next, we define the approximation space. Let $K \in \mathcal{T}_H$ be an element. There are $2^{d}$ standard Lagrange-type bilinear basis functions on $K$. Let $x_{i, K}$ be the vertices of $K$ and $\varphi_{j, K}$ be the standard Lagrange-type bilinear basis on $K$ such that $\varphi_{j, K}\left(\mathrm{x}_{i, K}\right)=\delta_{i j}$. 
For each element $K\in\mathcal{T}_H$, we define $\Theta_K$ as the set of ray directions in $K$. 
In particular, each entry $\mathbf{d} \in \Theta_K$ corresponds to a ray direction at the nodal point $\{\hat{\mathrm{x}}_{l,K}\}_{l=1}^L$. To start with,  we assume that there is only one ray at each nodal point so that there are $L$ entries in $\Theta_K$;  
we will deal with the case of multiple rays passing through a nodal point. For each $\mathbf{d}_{l,K}  \in \{ \mathbf{d}_{l,K}\}_{l=1}^L = \Theta_K$, we define the phase function $\widehat{\phi}_{l,K} : K \rightarrow \mathbb{R}$ by 
\begin{equation}
\widehat{\phi}_{l,K}(\mathrm{x}) = 1/c(\hat{\mathrm{x}}_{l,K}) \mathbf{d}_{l,K} \cdot (\mathrm{x} - \hat{\mathrm{x}}_{l,K}).
\end{equation}
Given a set of directions $\Theta_{K}$ for $K,$ we define the basis functions by
\begin{align}
\label{eq:basis}
\varphi_{j, K}(\mathrm{x}) e^{i \omega\widehat{\phi}_{l,K}(\mathrm{x})}, \quad \mathbf{d}_{l,K} \in \Theta_{K}, 1 \leq j \leq 2^{d}.
\end{align}
Note that, there are totally $2^d L$ basis functions for each element $K$.
We denote the local approximation space by $V_H\left(\Theta_{K}\right)$, which consists of linear combinations of these basis functions over $\mathbb{C}$, and the global approximation space by
\begin{align*}
V_H=\Pi_{K \in \mathcal{T}_H} V_H\left(\Theta_{K}\right).
\end{align*}
We remark that the choice of $\Theta_K$ will be presented in Section~\ref{sec:direction}. 
Next, we discuss the IPDG formulation for the Helmholtz equation with different boundary conditions.

\subsubsection{Impedance boundary condition} 
Consider the Helmholtz problem (\ref{eq:helmholtz1}) with an impedance boundary condition: 
\begin{align*}
\nabla u \cdot n+i(\omega / c) u=g, \quad \text { on } \partial \Omega.
\end{align*}
Following the derivation of the standard IPDG method \cite{Grote2006}, we obtain the following scheme for this problem:
Find $u_H \in V_H$ such that for any $v_H \in V_H$,
\begin{align}
\label{eq:scheme}
a_H\left(u_H, v_H\right)-\omega^{2}\left(c^{-2} u_H, v_H\right)_{L^{2}(\Omega)}=\left(f, v_H\right)_{L^{2}(\Omega)}+\left(g, v_H\right)_{L^{2}(\partial \Omega)}
\end{align}
where $a_H$ is given by
\begin{align*}
a_H(u, v):=& \int_{\Omega} \nabla u \cdot \nabla \overline{v} d x-\int_{\mathcal{F}_H^{I}}\{\!\!\{\nabla u\}\!\!\} \cdot \llbracket \bar{v} \rrbracket d s-\int_{F_H^{I}} \llbracket u \rrbracket \cdot\{\!\!\{\nabla \overline{v}\}\!\!\} d s \\
&+\frac{\mathrm{a}_p}{H} \int_{\mathcal{F}_H^{I}} \llbracket u] \cdot \llbracket \overline{v} \rrbracket d s+i \int_{\mathcal{F}_H^{B}} \omega c^{-1} u \bar{v} d s
\end{align*}
with $\mathrm{a}_p>0$ serving as the penalty parameter, which is large enough. Here we have used the usual average and jump operators for discontinuous Galerkin methods. Let $K^{\pm} \in \mathcal{T}_H$ be two elements sharing a face $F \in \mathcal{F}_H^I,$ and $\mathbf{n}^{\pm}$ be the outward normal of $K^{\pm}$, which is perpendicular to $F .$ Let $u$ be a smooth scalar function defined on $K^{\pm} .$ Then,
\begin{align*}
\{\nabla u\}:=\frac{1}{2}\left(\nabla u^{+}+\nabla u^{-}\right), \quad \llbracket u \rrbracket:=u^{+} \mathbf{n}^{+}+u^{-} \mathbf{n}^{-}.
\end{align*}

\subsubsection{Cauchy boundary condition}
Let $\partial\Omega = \Gamma_D \cup \Gamma_N$ where $\Gamma_D$ and $\Gamma_N$ are disjoint.
Consider the Helmholtz problem  (\ref{eq:helmholtz1}) with the Cauchy boundary condition:
\begin{align*}
u=g_{1} \quad \text { on } \Gamma_D, \quad\text{and}\quad \nabla u \cdot n = g_2 \quad\text{ on } \Gamma_N, 
\end{align*}
where $n$ is the unit outward normal vector on $\partial\Omega$.
We define the interior local approximation space
\begin{align*}
V_H^{\circ}\left(\Theta_{K}\right):=\operatorname{span}\left\{\varphi_{j, K}(\mathrm{x}) e^{i \omega \widehat{\phi}_K(\mathrm{x})}: \mathbf{d} \in \Theta_{K}, \varphi_{j,K} |_{\partial \Omega}  \equiv 0,1 \leq j \leq 2^d\right\}
\end{align*}
and the interior global approximation space $V_H^{\circ}:=\Pi_{K \in \mathcal{T}_H} V_H^{\circ}\left(\Theta_{K}\right).$ Then we solve the following problem:
Find $u_H \in V_H\left(\Theta_{K}\right)$ such that for any $v_H \in V_H$
\begin{align*}
\begin{aligned}
a_H^{C}\left(u_H, v_H\right)-\omega^{2}\left(c^{-2} u_H, v_H\right)_{L^{2}(\Omega)} &=\left(f, v_H\right)_{L^{2}(\Omega)}, \quad \text { for any } v_H \in V_H^{\circ}, \\
\int_{\Gamma_D} u_H v_H d s &=\int_{\Gamma_D} g_{1} v_H d s, \quad \text { for any } v_H \in V_H \backslash V_H^{\circ}, \\
\int_{\Gamma_N} \nabla u_H \cdot \mathbf{n} v_H d s &=\int_{\Gamma_N} g_{2} v_H d s, \quad \text { for any } v_H \in V_H \backslash V_H^{\circ},
\end{aligned}
\end{align*}
where $a^C_H$ is given by
\begin{align*}
a_H^{C}(u, v):=\int_{\Omega} \nabla u \cdot \nabla \overline{v} d x-\int_{\mathcal{F}_H^{I}}\{\nabla u\} \cdot \llbracket \overline{v}\rrbracket d s-\int_{\mathcal{F}_H^{I}} \llbracket u \rrbracket \cdot\{\!\!\{\nabla \overline{v}\}\!\!\} d s+\frac{\mathrm{a}_p}{H} \int_{\mathcal{F}_H^{I}} \llbracket u \rrbracket \cdot \llbracket \overline{v} \rrbracket d s.
\end{align*}

\subsubsection{Perfectly matched layer (PML)}
Supposing that $\Omega$ is embedded in a larger domain $\widetilde{\Omega} := [-\delta, 1+\delta]^{d},$ where $\delta>0$ is the width of the PMLs. Typically, we choose $\delta$ to be approximately several wavelengths. Furthermore we assume that $\delta$ is divisible by the mesh-size $H .$ In this way, we can divide $\Omega$ into $(M+2 \delta / H)^{d}$ cubic elements. We denote the set of all these elements by $\widetilde{\mathcal{T}}_H$. Note that $\mathcal{T}_H \subset \widetilde{\mathcal{T}}_H .$ 
%By abuse of notation, we denote the elements in $\widetilde{\mathcal{T}}_H$ by $K_{\mathbf{j}}=\left[\frac{j_{1}}{M}, \frac{j_{1}+1}{M}\right] \times \ldots \times\left[\frac{j_{d}}{M}, \frac{j_{d}+1}{M}\right]$ for $\mathbf{j} \in \mathbb{Z}_{M+2 \delta / h}^{d}$. 
Supposing that $\Theta_{K}$ is
defined for all $K_{\mathrm{j}} \in \widetilde{\mathcal{T}}_H,$ we extend the definition of the interior local approximation space $V_H^{\circ}\left(\Theta_{K}\right)$ and the interior global approximation space $V_H^{\circ}$ to the mesh $\widetilde{\mathcal{T}}_H$. We denote the extended spaces by $\widetilde{V}_H^{\circ}\left(\Theta_{K}\right)$ and $\widetilde{V}_H^{\circ}$, respectively. Then we introduce the PML function $s(x)$ :
\begin{align*}
s(x):=\frac{1}{1+i\gamma(x) / \omega}, \quad \gamma(x):=\frac{A_{p m l}}{\delta^{2}}\left(x^{2} \chi_{\{x<0\}}(x)+(x-1)^{2} \chi_{\{x>1\}}\right)
\end{align*}
where $\chi_{S}$ is the characteristic function of set $S, A_{p m l}$ controls the magnitude of $s(x),$ and $\delta$ is the width of the PML. Define
\begin{align*}
\widetilde{\nabla}:=\left(s\left(x_{1}\right) \frac{\partial}{\partial x_{1}}, \ldots, s\left(x_{d}\right) \frac{\partial}{\partial x_{d}}\right).
\end{align*}
The IPDG scheme for the Helmholtz problem with PML boundary conditions is: Find $u_H\in \widetilde{V}_H^\circ$ such that 
\begin{align*}
\widetilde{a}_H\left(u_H, v_H\right)-\omega^{2}\left(c^{-2} u_H, v_H\right)_{L^{2}(\Omega)}=\left(f, v_H\right)_{L^{2}(\Omega)}, \text { for any } v_H \in \widetilde{V}_H^{\circ}
\end{align*}
where
\begin{align*}
\widetilde{a}_H(u, v):=\int_{\Omega} \widetilde{\nabla} u \cdot \widetilde{\nabla} \overline{v} d x-\int_{\mathcal{F}_H^{I}}\{\!\!\{\widetilde{\nabla} u\}\!\!\} \cdot \llbracket \overline{v} \rrbracket d s-\int_{\mathcal{F}_H^{I}} \llbracket u \rrbracket \cdot\{\!\!\{\widetilde{\nabla} \overline{v}\}\!\!\} d s+\frac{\mathrm{a}_p}{H} \int_{\mathcal{F}_H^{I}} \llbracket u \rrbracket \cdot \llbracket \overline{v}\rrbracket d s
\end{align*}
with $\mathrm{a}_p$ the penalty parameter. 

\subsection{Error analysis}
\label{sec:approx}

In this section, we will present an error estimate of approximating the solution 
 $u$ of (\ref{eq:helmholtz1}) by the global approximation space $V_H$. We first recall some results from \cite{fu2021edge}. 
 Let $b_{DG}$ be an auxiliary bilinear form given by
 \begin{equation}
 b_{DG}(u,v) = a_{DG}(u,v) + 2 \omega^2 ( c^{-2} u,v)_{L^2(\Omega)}, \quad\forall  u,v \in V+V_H.
 \end{equation}
 We define the DG-norm as follows
 \begin{equation}
 \|v\|_{DG}^2 = \sum_{K\in\mathcal{T}_H} \|\nabla v\|^2_{L^2(K)} +  H^{-1} \| \mathrm{a}_p^{\frac{1}{2}}  \llbracket v \rrbracket \|^2_{\mathcal{F}_H^I}
 + \omega \| v\|^2_{\mathcal{F}_H^B} + \omega^2 \|v\|^2_{L^2(\Omega)},
 \end{equation}
 where
 \begin{equation*}
 \| \mathrm{a}_p^{\frac{1}{2}}  \llbracket v \rrbracket \|^2_{\mathcal{F}_H^I} = \sum_{E\in\mathcal{F}_H^I}  \| \mathrm{a}_p^{\frac{1}{2}}  \llbracket v \rrbracket \|^2_{L^2(E)},
 \quad
 \| v\|^2_{\mathcal{F}_H^B} = \sum_{E\in\mathcal{F}_H^B} \| v\|^2_{L^2(E)}.
 \end{equation*}
 Then we have the following continuity condition
 \begin{equation}
 \label{eq:cont}
 | a_{DG}(u,v) | \leq C_{cont} \|u\|_{DG} \, \|v\|_{DG}, \quad\forall u,v \in V_H,
 \end{equation}
 and the following coercivity condition
 \begin{equation}
 \label{eq:coer}
 | b_{DG}(v,v) | \geq C_{coer} \|v\|_{DG}^2, \quad\forall v\in V_H.
 \end{equation}
 Notice that we have assumed that the penalty parameter $\mathrm{a}_p$ is large enough; see \cite{fu2021edge}. 
Recall that the following Poincare inequality holds
\begin{equation}
\label{eq:poin}
\| v \|_{L^2(K)} \leq C_{p} H | v |_{H^1(K)}, \quad \forall v\in H^1_0(K),
\end{equation}
where, without loss of generality, we assume that $C_p$ is the same for all elements $K$.
On the other hand, we use the notation $a \lesssim b$ to denote the inequality $a \leq Cb$, 
where $C$ is a constant independent of $\omega$ and $H$.
 
 We will first prove an approximation result. We will assume 
 the following high frequency approximation
 \begin{equation}
 \label{eq:approx}
 u(\mathrm{x})=\sum_{n=1}^{N}  A_n(\mathrm{x}) e^{i \omega \phi_n(\mathrm{x})}+ \mathcal{O}\left(\omega^{-2}\right)
 \end{equation}
 holds in each element $K$, where $A_n$ and $\phi_n$ are smooth functions. We also assume that the number of wavefronts $N$
 is the same for all elements to simplify the discussions. 
 Notice that we skip the dependence of $A_n$ and $\phi_n$ on $K$ to simplify the notations. 
 We remark that we have used more terms in the geometric optics ansatz.
 On the other hand, we assume that the ray directions in $\Theta_K$ are sufficiently accurate. That is for
 the set of approximate directions $\Theta_{K}=\left\{\widehat{\mathbf{d}}_{n,l,K}\right\}_{n=1,l=1}^{N,L}$, we have
 \begin{equation}
 \sup _{K \in \mathcal{T}_{\mathrm{H}},n\leq N,l\leq L}\left|\widehat{\mathbf{d}}_{n,l,K}-\mathbf{d}_{n,l,K}\right|<\varepsilon,
 \end{equation}
 where $\varepsilon > 0$ is small.
 Here, we use $\mathbf{d}_{n,l,K}$ to denote the ray direction of $\phi_n$ at the point $\hat{\mathrm{x}}_{l,K}$
 and $\widehat{\mathbf{d}}_{n,l,K}$ to denote the corresponding approximate ray direction. 
 
 \begin{lemma}
 \label{lemma}
 Let $u$ be the solution of (\ref{eq:helmholtz1}). Assume that $H = \mathcal{O}(\omega^{-1})$.
 Assume further that $\omega H C_p / c_{max}  < 1$. Then there exists a function $u_I(\mathrm{x}) \in V_H$
 such that 
 \begin{equation}
 \label{eq:interp}
 \| u(\mathrm{x}) - u_I(\mathrm{x}) \|_{DG} \leq C(  \omega^{-1} + \omega H + \omega \varepsilon )+ C_b H \|f\|_{L^2(\Omega)}
 \end{equation}
 where $C_b >0$ is a constant independent of $H$ and $\omega$.
 \end{lemma}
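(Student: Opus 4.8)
The plan is to decompose $u$ into a geometric-optics piece and a small remainder, to take $u_I$ to be an elementwise ``geometric-optics interpolant'' obtained by bilinear interpolation of the amplitudes against the approximate plane waves, and to estimate the two resulting errors in the $DG$-norm separately. Concretely, on each $K$ write $u = w + r$ with $w := \sum_{n=1}^N A_n e^{i\omega\phi_n}$ the local field of \eqref{eq:approx} and $r := u - w = \mathcal{O}(\omega^{-2})$, so that $\|u - u_I\|_{DG} \le \|r\|_{DG} + \|w - u_I\|_{DG}$.

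For the remainder, the $L^2(\Omega)$-, boundary-face- and interior-jump pieces of $\|r\|_{DG}$ are directly $\mathcal{O}(\omega^{-1})$, using $\|r\|_{L^\infty(K)} \lesssim \omega^{-2}$, $H = \mathcal{O}(\omega^{-1})$, and the fact that $u \in H^1(\Omega)$ forces $\llbracket r \rrbracket = -\llbracket w \rrbracket$ with $|\llbracket w \rrbracket| \lesssim \omega^{-2}$ on interior faces. The one nontrivial piece is the broken $H^1$-seminorm of $r$, for which I would use that $r$ solves $-\nabla^2 r - (\omega/c)^2 r = f - \rho$ on $K$, where $\rho := -\nabla^2 w - (\omega/c)^2 w$ is the geometric-optics residual; since \eqref{eq:approx} keeps enough terms of the asymptotic expansion, the eikonal and transport equations hold to the retained order and $\|\rho\|_{L^\infty(K)} \lesssim \omega^{-1}$. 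A Caccioppoli-type interior estimate on a slightly dilated element $\widetilde{K}$ then yields $\|\nabla r\|_{L^2(K)} \lesssim \omega^{-1}|\widetilde{K}|^{1/2} + H\|f\|_{L^2(\widetilde{K})}$, and summing over $K$ gives $\|r\|_{DG} \lesssim \omega^{-1} + C_b H\|f\|_{L^2(\Omega)}$; this is where the $\omega^{-1}$ and $C_b H\|f\|_{L^2(\Omega)}$ terms of \eqref{eq:interp} originate.

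For the geometric-optics piece, fix on each $K$ the nodal point $x_0 := \hat{\mathrm{x}}_{1,K}$ and, for each wavefront $n$, the approximate phase $\widehat\phi_{n,1,K}(x) = c(x_0)^{-1}\widehat{\mathbf{d}}_{n,1,K}\cdot(x - x_0)$. By the eikonal equation $\nabla\phi_n(x_0) = c(x_0)^{-1}\mathbf{d}_{n,1,K}$, so the affine function $\ell_n := \bigl(\text{first-order Taylor of }\phi_n\text{ at }x_0\bigr) - \widehat\phi_{n,1,K} = \phi_n(x_0) + c(x_0)^{-1}(\mathbf{d}_{n,1,K} - \widehat{\mathbf{d}}_{n,1,K})\cdot(x - x_0)$ has $|\nabla \ell_n| \le \varepsilon/c_{min}$ and $\nabla^2 \ell_n = 0$. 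I then set $u_I|_K := \sum_{n=1}^N I_K\!\bigl(A_n e^{i\omega\ell_n}\bigr)\,e^{i\omega\widehat\phi_{n,1,K}} \in V_H(\Theta_K)$, where $I_K$ is the bilinear Lagrange interpolation operator. Per wavefront the error splits as
\[
A_n e^{i\omega\phi_n} - I_K(A_n e^{i\omega\ell_n})\,e^{i\omega\widehat\phi_{n,1,K}} = A_n\bigl(e^{i\omega\phi_n} - e^{i\omega T_1\phi_n}\bigr) + \bigl(A_n e^{i\omega\ell_n} - I_K(A_n e^{i\omega\ell_n})\bigr)e^{i\omega\widehat\phi_{n,1,K}},
\]
with $T_1\phi_n$ the first-order Taylor polynomial of $\phi_n$ at $x_0$. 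The first (phase-linearization) term is $\mathcal{O}\bigl(\omega|\phi_n - T_1\phi_n|\bigr) = \mathcal{O}(\omega H^2)$ pointwise and $\mathcal{O}(\omega H)$ after one differentiation, so by $\omega H \lesssim 1$, the trace inequality and a face-localized interpolation bound it contributes $\lesssim \omega H$ to $\|\cdot\|_{DG}$. For the second term, $\nabla^2\ell_n = 0$ gives $\|\nabla^2(A_n e^{i\omega\ell_n})\|_{L^\infty(K)} \lesssim 1 + \omega\varepsilon + \omega^2\varepsilon^2$, and standard bilinear interpolation estimates (in $L^2(K)$, $H^1(K)$, and $L^2$ on faces, all of the form $H^k\|\nabla^2(\cdot)\|_{L^2(K)}$) together with $|\nabla(ve^{i\omega\widehat\phi_{n,1,K}})| \le |\nabla v| + c_{min}^{-1}\omega|v|$ give a $\|\cdot\|_{DG}$-contribution $\lesssim (H + \omega H^2)(1 + \omega\varepsilon + \omega^2\varepsilon^2)|\Omega|^{1/2}$, which is $\lesssim \omega^{-1} + \omega\varepsilon$ once $\omega H \lesssim 1$, $\varepsilon < 1$ and $\omega \ge 1$ are used. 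Summing over the $N$ wavefronts and over $K$ gives $\|w - u_I\|_{DG} \lesssim \omega^{-1} + \omega H + \omega\varepsilon$, and combining with the remainder bound proves \eqref{eq:interp}.

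The analysis is elementary; the work is in the bookkeeping. Every differentiation of an oscillatory factor produces a spare power of $\omega$, $\omega\varepsilon$, or $\omega H$, and one must check that each term — in particular the $(\omega H)^k$ and $(\omega\varepsilon)^k$ terms with $k \ge 2$, and the interior-jump and boundary-face contributions to the DG-norm, which require face-localized interpolation estimates rather than the crude scaled trace inequality — collapses into the permitted budget $\omega^{-1} + \omega H + \omega\varepsilon$. This is exactly where the hypotheses $H = \mathcal{O}(\omega^{-1})$ and $\omega H C_p/c_{max} < 1$ are consumed. The softer difficulty is the remainder estimate: one needs the geometric-optics residual $\rho$ to be $\mathcal{O}(\omega^{-1})$ rather than $\mathcal{O}(1)$ — which is why \eqref{eq:approx} is stated with more terms of the ansatz than strictly needed — since that is what makes the interior/Caccioppoli argument deliver $\omega^{-1}$ and $C_b H\|f\|_{L^2(\Omega)}$ and not something larger.
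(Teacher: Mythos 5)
Your proposal is correct in substance and, for the main geometric-optics part, follows the same route as the paper: both proofs build $u_I$ as a bilinear nodal interpolant of the amplitudes multiplied by plane waves with the approximate directions, Taylor-expand the phases, and convert the phase errors into the budget $\omega H^2+\omega\varepsilon H$ in $L^2$ (one power of $H^{-1}$ worse for the gradient), finishing the face terms with the scaled trace inequality. The cosmetic differences --- you anchor the phase at a single nodal point and interpolate the product $A_n e^{i\omega\ell_n}$, while the paper averages over the $L$ nodal points and interpolates $A_n$ alone, pushing the whole mismatch into the factor $|e^{i\omega\phi_n}-e^{i\omega\widehat\phi_{n,l}}|\le\omega|\phi_n-\widehat\phi_{n,l}|$ --- do not change the orders. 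The genuine divergence is the origin of the $\omega^{-1}$ and $C_bH\|f\|_{L^2(\Omega)}$ terms. The paper first proves the lemma for $f=0$, absorbing the ansatz remainder as unjustified $\mathcal{O}(\omega^{-2})$ and $\mathcal{O}(\omega^{-1})$ insertions in the $L^2$ and $H^1$ estimates, and then treats $f\neq0$ by subtracting the local Dirichlet lifts $z_K\in H^1_0(K)$ of \eqref{eq:helmholtz2}; this is where the Poincar\'e inequality \eqref{eq:poin} and the hypothesis $\omega HC_p/c_{max}<1$ are literally consumed to yield $\|z_K\|_{DG}\le C_bH\|f\|_{L^2(\Omega)}$. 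You instead fold both $f$ and the ansatz remainder into one interior Caccioppoli estimate for $r=u-w$. That is tidier, and it actually supplies the gradient bound on the remainder that the paper merely asserts; but it costs an extra hypothesis --- that the geometric-optics residual $\rho=-\nabla^2w-(\omega/c)^2w$ is $\mathcal{O}(\omega^{-1})$, i.e.\ that \eqref{eq:approx} is a formal asymptotic solution rather than merely an $\mathcal{O}(\omega^{-2})$-accurate pointwise decomposition --- plus some care with dilated elements meeting $\partial\Omega$ and with the elementwise (hence discontinuous) definition of $w$. Note also that in your route the hypothesis $\omega HC_p/c_{max}<1$ is only ever used as $\omega H\lesssim1$, so you should not claim it is ``consumed'' in the same way: the precise Poincar\'e constant never enters a Caccioppoli argument, whereas it is exactly what the paper's $z_K$ step needs.
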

 
%for $\mathbf{x} \in \Omega$, where $c_n$ is a constant, $A_n \in H^{2}(\Omega)$ and $\phi_n \in H^{2}(\Omega)$, for all $n\leq N$, where $N$ is the number of wavefronts passing through $x$. For simplicity of notations, we replace $c_nA_n(x)$ by $A_n(x)$. 
\begin{proof}
We first assume that the source term $f=0$ in (\ref{eq:helmholtz1}).
Let $K \in \mathcal{T}_H$. Let $d_{n,l,K}:=\nabla \phi_n\left(\hat{\mathrm{x}}_{l,K}\right) /\left|\nabla \phi_n\left(\hat{\mathrm{x}}_{l,K}\right)\right|$ be the ray direction of $A_n(\mathbf{x}) e^{i \omega \phi_n(\mathbf{x})}$ at $\hat{\mathrm{x}}_{l,K}$. 
%We assume that the set of approximate directions $\Theta_{K}=\left\{\widehat{\mathbf{d}}_{n,l,K}\right\}_{n=1,l=1}^{N,L}$ incurs a small error $\varepsilon$, that is 
 %$$\sup _{K \in \mathcal{T}_{\mathrm{h}},n\leq N,l\leq L}\left|\widehat{\mathrm{d}}_{n,l,K}-\mathrm{d}_{n,l,K}\right|<\varepsilon.$$ 
 Next, we define the nodal interpolation of $u$ on $K$ by:
\begin{align*}
u_{I}(\mathrm{x}):= \frac{1}{L} \sum_{n=1}^{N} \sum_{j,l}  A_n\left(\mathrm{x}_{j, K}\right) \varphi_{j,K}(\mathrm{x}) e^{i \omega \widehat{\phi}_{n,l}(\mathrm{x})}, 
\end{align*}
where
\begin{align}
\label{eq:expansion_phi}
\widehat{\phi}_{n,l}(\mathrm{x}):=\phi_n\left(\hat{\mathrm{x}}_{l,K}\right)+\left|\nabla \phi_n\left(\hat{\mathrm{x}}_{l,K}\right)\right| \widehat{\mathbf{d}}_{n,l,K} \cdot\left(\mathrm{x}-\hat{\mathrm{x}}_{l,K} \right ),
\end{align}
and $\mathrm{x}_{j, K}$, for $j=1,2, \cdots, 2^d$, are the vertices of $K$. The above summation is over $j=1,2,\cdots, 2^d$ and $l = 1,2,\cdots, L$.
We first estimate the error of using the expansion (\ref{eq:expansion_phi}) to approximate $\phi_n$.
%Let $K_{l} = \{ \mathrm{x} \; : \; \| \mathrm{x} - \hat{\mathrm{x}}_{l,K} \|_{\infty} < h/2\}$. 
Notice that 
\begin{align*}
\begin{aligned}
&~~~\|\phi_n(\mathrm{x})-\widehat{\phi}_{n,l}(\mathrm{x})\|_{L^{2}(K)} \\
& \leq\left\|\phi_n(\mathrm{x})-\phi_n\left(\hat{\mathrm{x}}_{l,K}\right)-\nabla \phi_n\left(\hat{\mathrm{x}}_{l,K}\right) \cdot\left(\mathrm{x}-\hat{\mathrm{x}}_{l,K}\right)\right\|_{L^{2}(K)} \\
&\quad+\left|\nabla \phi_n\left(\hat{\mathrm{x}}_{l,K}\right)\right|\left\|\left(\mathbf{d}_{n,l,K}-\widehat{\mathbf{d}}_{n,l,K}\right) \cdot\left(\mathrm{x}-\hat{\mathrm{x}}_{l,K}\right)\right\|_{L^{2}(K)} \\
& \lesssim H^{2} |  \phi_n|_{H^{2}(K)} +\varepsilon H^{1+\frac{d}{2}} \left\| \phi_n \right\|_{W^{1,\infty}(K)}. 
%& \lesssim H^{2}\sup_{n\leq N}\left\|\nabla^{2} \phi_n\right\|_{L^{\infty}(K)}|K|+\varepsilon H\sup_{n\leq N}\left|\nabla \phi_n\left(\hat{\mathrm{x}}_{l,K}\right)\right||K|
\end{aligned}
\end{align*}
So, we have
\begin{equation}
\label{eq:phi1}
\|\phi_n(\mathrm{x})-\widehat{\phi}_{n,l}(\mathrm{x})\|_{L^{2}(K)}  
\lesssim 
H^{2} |  \phi_n|_{H^{2}(K)} +\varepsilon  H^{1+\frac{d}{2}} \left\| \phi_n \right\|_{W^{1,\infty}(K)}. 
\end{equation}
Similarly, we have
\begin{equation}
\label{eq:phi2}
\| \nabla (\phi_n(\mathrm{x})-\widehat{\phi}_{n,l}(\mathrm{x}) ) \|_{L^{2}(K)}  
\lesssim 
H |  \phi_n|_{H^{2}(K)} +\varepsilon  H^{\frac{d}{2}} \left\| \phi_n \right\|_{W^{1,\infty}(K)}. 
\end{equation}
%Assume that $u$ is a smooth function.

Using the triangle inequality and the assumption (\ref{eq:approx}), we obtain
\begin{align*}
\begin{aligned}
&\left\|u(\mathrm{x})-u_{I}(\mathrm{x})\right\|_{L^{2}(K)} \\
\leq &\left\|\sum_{n=1}^{N}A_n(\mathrm{x}) e^{i \omega \phi_n(\mathrm{x})}-\sum_{n=1}^{N}  \sum_{j=1}^{2^d} A_n\left(\mathrm{x}_{j, K}\right) \varphi_{j,K}(\mathrm{x}) e^{i \omega \phi_n(\mathrm{x})}\right\|_{L^{2}(K)}\\
&~~~+ \frac{1}{L} \left\|  \sum_{n=1}^{N}   \sum_{j,l} A_n\left(\mathrm{x}_{j, K}\right) \varphi_{j,K}(\mathrm{x})\left[e^{i \omega \phi_n(\mathrm{x})}-e^{i \omega \widehat{\phi}_{n,l}(\mathrm{x})}\right]\right\|_{L^{2}(K)}+\mathcal{O}\left(\omega^{-2}\right) \\
\leq & \sum_{n=1}^{N}\left\|A_n(\mathrm{x})-\sum_{j=1}^{2^d} A_n\left(\mathrm{x}_{j, K}\right) \varphi_{j,K}(\mathrm{x})\right\|_{L^{2}(K)} \\
&+\frac{1}{L} \sum_{n=1}^{N}\sum_{j,l}  \|A_n\|_{L^{\infty}(\Omega)}\left\|e^{i \omega \phi_n(\mathrm{x})}-e^{i \omega \widehat{\phi}_{n,l}(\mathrm{x})}\right\|_{L^{2}(K)}+\mathcal{O}\left(\omega^{-2}\right) \\
\lesssim & H^{2}\sum_{n=1}^{N}|A_n|_{H^{2}(K)}+\omega  \sum_{n=1}^{N}  \|A_n\|_{L^{\infty}(K)}\|\phi_n(\mathrm{x})-\widehat{\phi}_{n,l}(\mathrm{x})\|_{L^{2}(K)}+\mathcal{O}\left(\omega^{-2}\right).  
\end{aligned}
\end{align*}
Summing this inequality for all $K \in \mathcal{T}_H$ and using (\ref{eq:phi1}), we have 
\begin{align*}
\begin{aligned}
&\left\|u(\mathrm{x})-u_{I}(\mathrm{x})\right\|_{L^{2}(\Omega)} \\
 \lesssim &~ H^{2}\sum_{n=1}^N |A_n|_{H^{2}(\Omega)}+\omega \sum_{n=1}^N \|A_n\|_{L^{\infty}(\Omega)} \left(  H^{2} |  \phi_n|_{H^{2}(\Omega)} +\varepsilon H \left\| \phi_n \right\|_{W^{1,\infty}(\Omega)} \right)+\mathcal{O}\left(\omega^{-2}\right) \\
 \lesssim &~H^{2}+\omega H^{2}+\omega \varepsilon H+\mathcal{O}\left(\omega^{-2}\right).
\end{aligned}
\end{align*}
Using the assumptions on $H$, we have
\begin{equation}
\omega \left\|u(\mathrm{x})-u_{I}(\mathrm{x})\right\|_{L^{2}(\Omega)} 
\lesssim \omega^{-1} + \omega H + \omega \varepsilon. 
\end{equation}

Next, we estimate the gradient term. Using similar techniques as above, we have 
\begin{align*}
\begin{aligned}
&\left\| \nabla ( u(\mathrm{x})-u_{I}(\mathrm{x}) )\right\|_{L^{2}(K)} \\
\leq &\left\| \nabla \Big( \sum_{n=1}^{N}A_n(\mathrm{x}) e^{i \omega \phi_n(\mathrm{x})}-\sum_{n=1}^{N}  \sum_{j=1}^{2^d} A_n\left(\mathrm{x}_{j, K}\right) \varphi_{j,K}(\mathrm{x}) e^{i \omega \phi_n(\mathrm{x})} \Big) \right\|_{L^{2}(K)}\\
&+\frac{1}{L}\left\|  \nabla \Big( \sum_{n=1}^{N}   \sum_{j,l} A_n\left(\mathrm{x}_{j, K}\right) \varphi_{j,K}(\mathrm{x})\left[e^{i \omega \phi_n(\mathrm{x})}-e^{i \omega \widehat{\phi}_{n,l}(\mathrm{x})}\right]\Big)\right\|_{L^{2}(K)}+\mathcal{O}\left(\omega^{-1}\right) \\
\lesssim &\sum_{n=1}^{N}\left\| \nabla \Big( A_n(\mathrm{x})-\sum_{j=1}^{2^d} A_n\left(\mathrm{x}_{j, K}\right) \varphi_{j,K}(\mathrm{x}) \Big) \right\|_{L^{2}(K)} \\
&+ \omega \|\phi_n\|_{W^{1,\infty}(K)} \sum_{n=1}^{N}\left\|A_n(\mathrm{x})-\sum_{j=1}^{2^d} A_n\left(\mathrm{x}_{j, K}\right) \varphi_{j,K}(\mathrm{x}) \right\|_{L^{2}(K)} \\
&+ H^{-1} \sum_{n=1}^{N}\sum_{j,l}  \|A_n\|_{L^{\infty}(\Omega)}\left\|e^{i \omega \phi_n(\mathrm{x})}-e^{i \omega \widehat{\phi}_{n,l}(\mathrm{x})}\right\|_{L^{2}(K)}  \\
&+\sum_{n=1}^{N}\sum_{j,l}  \|A_n\|_{L^{\infty}(\Omega)}\left\| \nabla (e^{i \omega \phi_n(\mathrm{x})}-e^{i \omega \widehat{\phi}_{n,l}(\mathrm{x})} )\right\|_{L^{2}(K)}+\mathcal{O}\left(\omega^{-1}\right) \\
\lesssim & H\sum_{n=1}^{N}|A_n|_{H^{2}(K)}+ \omega H^2\sum_{n=1}^{N}|A_n|_{H^{2}(K)}
+\omega H^{-1}  \sum_{n=1}^{N}  \|A_n\|_{L^{\infty}(K)}\|\phi_n(\mathrm{x})-\widehat{\phi}_{n,l}(\mathrm{x})\|_{L^{2}(K)} \\
&+\omega^2\sum_{n=1}^{N}  \|A_n\|_{L^{\infty}(K)}\|\phi_n(\mathrm{x})-\widehat{\phi}_{n,l}(\mathrm{x})\|_{L^{2}(K)} \\
&+\omega \sum_{n=1}^{N}  \|A_n\|_{L^{\infty}(K)}\| \nabla ( \phi_n(\mathrm{x})-\widehat{\phi}_{n,l}(\mathrm{x}) )\|_{L^{2}(K)}+\mathcal{O}\left(\omega^{-1}\right).
\end{aligned}
\end{align*}
Summing this inequality for all $K \in \mathcal{T}_H$, using (\ref{eq:phi1})-(\ref{eq:phi2}), and using the assumptions on $H$, we have 
\begin{equation}
\begin{split}
& \,\left\| \nabla ( u(\mathrm{x})-u_{I}(\mathrm{x}) )\right\|_{L^2(\Omega)} \\
\lesssim & \, H + \omega H^2 + \omega  H^{-1} (H^2+\varepsilon H) + \omega^2 (H^2+ \varepsilon H) + \omega (H+ \varepsilon) +\mathcal{O}\left(\omega^{-1}\right) \\
\lesssim & \, \omega^{-1} + (\omega H)^2 + \omega \varepsilon.
\end{split}
\end{equation}

We will now estimate the terms involving faces. First, we recall the trace inequality,
\begin{equation}
\label{eq:trace}
\| v\|_{L^2(\partial K)}^2 \lesssim H^{-1} \|v\|_{L^2(K)}^2 + H |v|_{H^1(K)}^2.
\end{equation}
So,
\begin{equation*}
\omega^{\frac{1}{2}} \| u(\mathrm{x}) - u_I(\mathrm{x})\|_{\mathcal{F}^B_H} 
\lesssim \omega^{\frac{1}{2}} H^{-\frac{1}{2}} \|u(\mathrm{x}) - u_I(\mathrm{x})\|_{L^2(\Omega)} + \omega^{\frac{1}{2}} H^{\frac{1}{2}} | u(\mathrm{x}) - u_I(\mathrm{x}) |_{H^1(\Omega)}.
\end{equation*}
Moreover,
\begin{equation*}
H^{-\frac{1}{2}} \|  \llbracket u(\mathrm{x}) - u_I(\mathrm{x}) \rrbracket \|_{\mathcal{F}^I_H} 
\lesssim  H^{-1} \|u(\mathrm{x}) - u_I(\mathrm{x})\|_{L^2(\Omega)} +  | u(\mathrm{x}) - u_I(\mathrm{x}) |_{H^1(\Omega)}.
\end{equation*}
Combining all results, we obtain (\ref{eq:interp}) for the case $f=0$. 

Now, we consider the general case with non-zero source $f$. 
Let $z_K \in H^1_0(K)$ be the solution of 
\begin{align}
\label{eq:helmholtz2}
-\nabla^{2} z_K -(\omega / c)^{2} z_K=f, \quad \text { in } K.
\end{align}
Here we assume that the problem (\ref{eq:helmholtz2}) is well-posed. Then, 
\begin{equation*}
| z_K|_{H^1(K)}^2 - (\omega / c)^2 \| z_K\|_{L^2(K)}^2 = (f,z_K)_{L^2(\Omega)}. 
\end{equation*}

Using the assumption $\omega H C_p / c_{max}  < 1$, there is a constant $C_{b}$ such that 
\begin{equation}
\| z_K\|_{DG} \leq C_b H \|f\|_{L^2(\Omega)}.
\end{equation}
Hence, (\ref{eq:interp}) follows. 
\end{proof}

Next, we prove a quasi-optimality result. 

\begin{lemma}
\label{lemma2}
Let $u$ be the solution of (\ref{eq:helmholtz1}) and $u_H\in V_H$ be the solution of (\ref{eq:scheme}). 
Assume that $2c_{min}^{-2}\omega H C_b<C_{coer}$.
We have 
\begin{equation*}
\| u-u_H\|_{DG} \lesssim \inf_{v_H \in V_H} \|u-v_H\|_{DG} + \omega^{-1} +\omega H + \omega \varepsilon.
\end{equation*}
\end{lemma}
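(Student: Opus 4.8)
This is a quasi-optimality (C\'ea-type) estimate for an indefinite sesquilinear form, so I would run a Schatz/Aubin--Nitsche argument on three facts already at hand: consistency of \eqref{eq:scheme} (hence Galerkin orthogonality), the continuity \eqref{eq:cont} of $a_{DG}$, and the G\r{a}rding-type coercivity \eqref{eq:coer} of $b_{DG}=a_{DG}+2\omega^{2}(c^{-2}\cdot,\cdot)_{L^{2}(\Omega)}$, where $a_{DG}(w,v):=a_{H}(w,v)-\omega^{2}(c^{-2}w,v)_{L^{2}(\Omega)}$. Testing the impedance problem \eqref{eq:helmholtz1} element-wise against $\overline{v_{H}}$ and summing (using continuity of $u$, $\nabla u$ across interior faces and the impedance condition on $\partial\Omega$) shows $a_{DG}(u,v_{H})=(f,v_{H})_{L^{2}(\Omega)}+(g,v_{H})_{L^{2}(\partial\Omega)}$ for all $v_{H}\in V_{H}$; subtracting \eqref{eq:scheme} gives $a_{DG}(u-u_{H},v_{H})=0$ for all $v_{H}\in V_{H}$. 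Fixing an arbitrary $v_{H}\in V_{H}$, set $\eta:=u-v_{H}$ and $\xi:=v_{H}-u_{H}\in V_{H}$; by the triangle inequality it suffices to bound $\|\xi\|_{DG}$ by $\|\eta\|_{DG}$ plus lower-order terms and then take the infimum over $v_{H}$.

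Testing coercivity on $\xi$ and using Galerkin orthogonality in the first slot, $a_{DG}(\xi,\xi)=a_{DG}(v_{H}-u_{H},\xi)=a_{DG}(v_{H}-u,\xi)=-a_{DG}(\eta,\xi)$, so \eqref{eq:coer} and \eqref{eq:cont} yield
\[
C_{coer}\|\xi\|_{DG}^{2}\le C_{cont}\|\eta\|_{DG}\|\xi\|_{DG}+2\omega^{2}\big|(c^{-2}\xi,\xi)_{L^{2}(\Omega)}\big|.
\]
The point of the hypothesis $2c_{min}^{-2}\omega HC_{b}<C_{coer}$ is that the proof will be finished once the last term is shown to be $\le 2c_{min}^{-2}\omega HC_{b}\|\xi\|_{DG}^{2}$ (modulo additive terms of order $\|\eta\|_{DG}\|\xi\|_{DG}$ and $(\omega^{-1}+\omega H+\omega\varepsilon)\|\xi\|_{DG}$): one then moves it to the left-hand side, divides by $\|\xi\|_{DG}$, and takes the infimum over $v_{H}$. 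So the crux is to estimate $\omega^{2}(c^{-2}\xi,\xi)$, which I would do by duality.

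For that, on each $K$ let $z_{K}\in H^{1}_{0}(K)$ solve the local Helmholtz problem $-\nabla^{2}z_{K}-(\omega/c)^{2}z_{K}=\omega^{2}c^{-2}\xi$ in $K$ (the right-hand side being element-wise smooth, since $\xi|_{K}\in V_{H}(\Theta_{K})$); under $\omega HC_{p}/c_{max}<1$ the estimate from the proof of Lemma~\ref{lemma} gives $\|z_{K}\|_{DG}\le C_{b}H\omega^{2}\|c^{-2}\xi\|_{L^{2}(K)}$, hence $\omega\|z_{K}\|_{L^{2}(K)}\lesssim C_{b}\omega^{2}H\|\xi\|_{L^{2}(K)}$. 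Two integrations by parts, using $z_{K}|_{\partial K}=0$, give
\[
\omega^{2}\|c^{-1}\xi\|_{L^{2}(K)}^{2}=\int_{K}z_{K}\,\overline{\big(-\nabla^{2}\xi-\omega^{2}c^{-2}\xi\big)}\,dx-\int_{\partial K}(\nabla z_{K}\cdot n)\,\bar\xi\,ds.
\]
Because the basis functions \eqref{eq:basis} have harmonic bilinear factors and linear phases with $|\nabla\widehat{\phi}|=c(\hat{\mathrm{x}})^{-1}$, the element residual obeys $\|-\nabla^{2}\xi-\omega^{2}c^{-2}\xi\|_{L^{2}(K)}\lesssim\omega\|\nabla\xi\|_{L^{2}(K)}+\omega^{2}(1+H)\|\xi\|_{L^{2}(K)}$ (the $H$-term from the Lipschitz continuity of $c^{-2}$), so summing over $K$, using Cauchy--Schwarz and the DG-norm, the volume contribution is $\lesssim C_{b}\omega H\|\xi\|_{DG}^{2}$. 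The face terms, after summing over $K$, become the standard interior-face average/jump combination (plus boundary-face terms), which one rewrites as $a_{DG}$ applied to the zero-extension $z=\sum_{K}z_{K}\in H^{1}_{0}(\Omega)$ against $\xi$ minus already-controlled remainders, and then estimates via Galerkin orthogonality together with the trace and Poincar\'e inequalities \eqref{eq:trace}, \eqref{eq:poin}; the outcome is again $\lesssim C_{b}\omega H\|\xi\|_{DG}^{2}$ plus terms of order $\|\eta\|_{DG}\|\xi\|_{DG}$ and $(\omega^{-1}+\omega H+\omega\varepsilon)\|\xi\|_{DG}$.

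Combining with the coercivity inequality and absorbing, I conclude $\|\xi\|_{DG}\lesssim\|\eta\|_{DG}+\omega^{-1}+\omega H+\omega\varepsilon$ and hence $\|u-u_{H}\|_{DG}\le\|\eta\|_{DG}+\|\xi\|_{DG}\lesssim\|u-v_{H}\|_{DG}+\omega^{-1}+\omega H+\omega\varepsilon$; taking the infimum over $v_{H}$ gives the claim. The main obstacle is the face-term estimate in the duality step: naive bounds on $\int_{\partial K}(\nabla z_{K}\cdot n)\bar\xi$ are only $\mathcal{O}(1)\|\xi\|_{DG}^{2}$, so the gain of the factor $\omega HC_{b}$ must come from a careful use of Galerkin orthogonality (exploiting that the jumps of $z$ vanish while those of $\nabla z$ do not), with constants tracked precisely enough that exactly $2c_{min}^{-2}\omega HC_{b}$ multiplies $\omega^{2}\|\xi\|_{L^{2}(\Omega)}^{2}$ after absorption; the non-Hermitian impedance term of $a_{DG}$ is a minor additional nuisance. (The terms $\omega^{-1}+\omega H+\omega\varepsilon$ in the statement may equally be viewed as already controlling $\inf_{v_{H}}\|u-v_{H}\|_{DG}$, which Lemma~\ref{lemma} bounds by these plus $C_{b}H\|f\|_{L^{2}(\Omega)}$.)
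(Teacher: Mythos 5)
Your opening is essentially the paper's: consistency/Galerkin orthogonality, the G\r{a}rding splitting $b_{DG}=a_{DG}+2\omega^{2}(c^{-2}\cdot,\cdot)$, coercivity \eqref{eq:coer} plus continuity \eqref{eq:cont} to reduce everything to absorbing a term of the form $2\omega^{2}c_{min}^{-2}\|\cdot\|_{L^{2}(\Omega)}^{2}$, and a Schatz-type duality argument to gain the factor $\omega H C_{b}$. (The paper applies coercivity directly to $u-u_{H}$ rather than to $\xi=v_{H}-u_{H}$, and bounds $\omega^{2}\|u-u_{H}\|_{L^{2}}^{2}$ rather than $\omega^{2}(c^{-2}\xi,\xi)$; these are cosmetic differences.) Where you diverge is in how the duality step is executed, and that is exactly where your proposal has a genuine gap. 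You pose \emph{element-local} dual problems $z_{K}\in H_{0}^{1}(K)$ and integrate by parts twice on each element. As you yourself concede in the last paragraph, the resulting face terms $\int_{\partial K}(\nabla z_{K}\cdot n)\bar\xi\,ds$ are only $\mathcal{O}(1)\,\omega^{2}\|\xi\|_{L^{2}}^{2}$ under the natural trace estimates, i.e.\ they are of the same size as the quantity you are trying to absorb, so no absorption is possible. Your suggested repair --- rewriting the summed face terms as ``$a_{DG}$ applied to the zero-extension $z=\sum_{K}z_{K}$'' and invoking Galerkin orthogonality --- does not go through: the concatenation of the local solutions is not the global dual solution (its normal derivatives jump across interior faces, so $a_{DG}(\cdot,z)$ does not reproduce $\omega^{2}(c^{-2}\xi,\cdot)$), and Galerkin orthogonality constrains $a_{DG}(u-u_{H},w_{H})$ for $w_{H}\in V_{H}$, which is not the pairing that appears in your face terms. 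So the crux of the lemma is left unproved.

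The paper avoids this entirely by solving \emph{one global} dual problem $-\Delta z-(\omega/c)^{2}z=\omega^{2}(u-u_{H})$ on $\Omega$. Adjoint consistency then gives $\omega^{2}\|u-u_{H}\|_{L^{2}(\Omega)}^{2}=a_{DG}(u-u_{H},z)=a_{DG}(u-u_{H},z-z_{I})$ with no stray face terms, and the interpolant $z_{I}$ is controlled by re-running the proof of Lemma~\ref{lemma} on $z$: the geometric-optics part contributes $C(\omega^{-1}+\omega\varepsilon)$ and the source part contributes exactly the $C_{b}H\|f\|_{L^{2}}$ term of \eqref{eq:interp} with $f=\omega^{2}(u-u_{H})$, i.e.\ $\omega^{2}HC_{b}\|u-u_{H}\|_{L^{2}(\Omega)}$. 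Since $\omega\|u-u_{H}\|_{L^{2}(\Omega)}\le\|u-u_{H}\|_{DG}$ by the definition of the DG-norm, this yields $\omega^{2}\|u-u_{H}\|_{L^{2}}^{2}\le C(\omega^{-1}+\omega\varepsilon)\|u-u_{H}\|_{DG}+\omega HC_{b}\|u-u_{H}\|_{DG}^{2}$, and the hypothesis $2c_{min}^{-2}\omega HC_{b}<C_{coer}$ allows absorption. If you replace your local duality step by this global one (reusing Lemma~\ref{lemma}'s treatment of the nonzero source, which is precisely why that lemma was proved with an $f$-term), the rest of your argument closes.
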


\begin{proof}
By the coercivity condition (\ref{eq:coer}) and the Galerkin orthogonality,
\begin{equation*}
\begin{split}
C_{coer} \| u-u_H\|_{DG}^2 &\leq b_{DG}(u-u_H,u-u_H) \\
&= a_{DG}(u-u_H,u-v_H) + 2\omega^2 (c^{-2} (u-u_H),u-u_H)_{L^2(\Omega)}
\end{split}
\end{equation*}
for all $v_H \in V_H$. So, we have
\begin{equation*}
C_{coer} \| u-u_H\|_{DG}^2  \leq C_{cont} \|u-u_H\|_{DG} \|u-v_H\|_{DG} + 2 \omega^2 c_{min}^{-2} \|u-u_H\|_{L^2(\Omega)}^2.
\end{equation*}
Let $z$ be the solution of the dual problem with source $\omega^2 (u-u_H)$, namely,
\begin{equation*}
-\Delta z - (\omega / c)^2 z = \omega^2 (u-u_H).
\end{equation*} 
Using the same argument in the proof of Lemma~\ref{lemma}, we have 
\begin{equation}
 \| z(\mathrm{x}) - z_I(\mathrm{x}) \|_{DG} \leq C( \omega^{-1} + \omega \varepsilon )+ \omega^2 H C_b \|u-u_H\|_{L^2(\Omega)}.
\end{equation}
By the definition of $z$ and the consistency of the IPDG scheme, we have
\begin{equation*}
\begin{split}
\omega^2 \| u-u_H\|_{L^2(\Omega)}^2 &= a_{DG}(u-u_H,z)  \\
&= a_{DG}(u-u_H, z-z_I) \\
& \leq \|u-u_H\|_{DG} \|z-z_I\|_{DG}.
\end{split}
\end{equation*}
We have
\begin{equation*}
\begin{split}
\omega^2 \| u-u_H\|_{L^2(\Omega)}^2 &\leq C (\omega^{-1}+\omega \varepsilon) \|u-u_H\|_{DG} + \omega^2 H C_b \|u-u_H\|_{DG} \| u -u_H\|_{L^2(\Omega)}  \\
&\leq C (\omega^{-1}+\omega \varepsilon) \|u-u_H\|_{DG} + \omega H C_b \|u-u_H\|_{DG}^2.
\end{split}
\end{equation*}
So, if $2c_{min}^{-2}\omega H C_b<C_{coer}$, we have 
\begin{equation*}
\| u-u_H\|_{DG} \lesssim \|u-v_H\|_{DG} + \omega^{-1} + \omega H + \omega \varepsilon.
\end{equation*}
This completes the proof.
\end{proof}

Finally, we state the error bound, whose proof is based on Lemma~\ref{lemma} and Lemma~\ref{lemma2}.

\begin{theorem}
Let $u$ be the solution of (\ref{eq:helmholtz1}) and $u_H\in V_H$ be the solution of (\ref{eq:scheme}). Assume that the conditions on $H$ stated in Lemma~\ref{lemma} and Lemma~\ref{lemma2} holds. 
We have
\begin{equation}
\label{eq:convDG}
\| u-u_H\|_{DG} \lesssim \omega^{-1} + \omega H + \omega \varepsilon + H \|f\|_{L^2(\Omega)},
\end{equation}
and
\begin{equation}
\| u-u_H\|_{L^2(\Omega)} \lesssim \omega^{-2} + H +  \varepsilon + \omega^{-1} H \|f\|_{L^2(\Omega)}.
\end{equation}
\end{theorem}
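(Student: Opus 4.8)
The plan is to chain the two lemmas and then extract the $L^2$ estimate directly from the DG estimate using the structure of the DG-norm; no further duality argument is needed beyond what is already in Lemma~\ref{lemma2}.

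First I would apply the quasi-optimality estimate of Lemma~\ref{lemma2}, which under the stated hypotheses on $H$ gives
\[
\| u - u_H \|_{DG} \lesssim \inf_{v_H \in V_H} \| u - v_H \|_{DG} + \omega^{-1} + \omega H + \omega \varepsilon .
\]
Next I would bound the best-approximation term from above by the explicit interpolant $u_I \in V_H$ constructed in the proof of Lemma~\ref{lemma}; since $u_I$ is admissible in the infimum, Lemma~\ref{lemma} yields
\[
\inf_{v_H \in V_H} \| u - v_H \|_{DG} \le \| u - u_I \|_{DG} \le C\bigl( \omega^{-1} + \omega H + \omega \varepsilon \bigr) + C_b H \| f \|_{L^2(\Omega)} .
\]
Substituting this into the previous inequality and absorbing constants produces the first claimed bound (\ref{eq:convDG}). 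One must check here that the two sets of smallness conditions on $H$ are simultaneously imposed — namely $H = \mathcal{O}(\omega^{-1})$ and $\omega H C_p / c_{max} < 1$ from Lemma~\ref{lemma}, together with $2 c_{min}^{-2} \omega H C_b < C_{coer}$ from Lemma~\ref{lemma2} — which is precisely the blanket hypothesis of the theorem.

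For the $L^2$ bound I would not run a new Aubin--Nitsche argument: by the definition of the DG-norm one has $\omega^2 \| v \|_{L^2(\Omega)}^2 \le \| v \|_{DG}^2$ for every $v \in V + V_H$, since all summands in $\|v\|_{DG}^2$ are nonnegative, hence $\| v \|_{L^2(\Omega)} \le \omega^{-1} \| v \|_{DG}$. Applying this with $v = u - u_H$ and inserting (\ref{eq:convDG}) gives
\[
\| u - u_H \|_{L^2(\Omega)} \le \omega^{-1} \| u - u_H \|_{DG} \lesssim \omega^{-2} + H + \varepsilon + \omega^{-1} H \| f \|_{L^2(\Omega)} ,
\]
which is the second claimed estimate.

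I do not expect a genuine obstacle at this level of the argument: the real difficulties have already been resolved in Lemma~\ref{lemma} (tracking the geometric-optics interpolation error through the plane-wave factors, which is where the $\omega H$ and $\omega \varepsilon$ terms arise) and in Lemma~\ref{lemma2} (the Schatz-type duality that lets the $\omega^2 \| u - u_H \|_{L^2(\Omega)}^2$ term be absorbed into the DG-norm, conditional on $\omega H$ being small). The only care needed in assembling the theorem is bookkeeping: confirming that the hypotheses of the two lemmas are compatible and jointly in force, and verifying that the crude inequality $\| \cdot \|_{L^2(\Omega)} \le \omega^{-1} \| \cdot \|_{DG}$ is exactly what converts the DG convergence rate into the sharper $L^2$ rate without any loss in the powers of $\omega$ and $H$.
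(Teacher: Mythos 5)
Your proposal is correct and matches the paper's intended argument: the paper gives no separate proof beyond stating that the theorem follows from Lemma~\ref{lemma} and Lemma~\ref{lemma2}, which is exactly your chaining of quasi-optimality with the interpolant $u_I$. Your observation that the $L^2$ bound is just $\omega^{-1}$ times the DG bound, via the term $\omega^2\|v\|_{L^2(\Omega)}^2$ in the definition of $\|\cdot\|_{DG}^2$, is precisely why the stated $L^2$ rate has the form it does, so no additional duality argument is needed.
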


We remark that the convergence of the DG-norm (\ref{eq:convDG}) requires $\omega H \rightarrow 0$.

\section{Learning of ray directions using deep neural networks}
\label{sec:direction}

In this section, we will present a machine learning approach to determine
the ray directions required in the ray-based basis functions. 

\subsection{Deep neural network model}\label{sec:deep}
We will use the notation $\mathcal{N}$ to denote a neural network with $M$ layers, where $x$ is the input and $y$ is the
corresponding output. We write
\begin{align*}
y=\mathcal{N}(x ; \theta):=\sigma\left(W_{M} \sigma\left(\cdots \sigma\left(W_{2} \sigma\left(W_{1} x+b_{1}\right)+b_{2}\right) \cdots\right)+b_{M}\right), 
\end{align*}
where $\theta :=\left(W_{1}, \cdots, W_{M}, b_{1}, \cdots, b_{M}\right)$, $W_i$ are the weight matrices and $b_i$ are the bias vectors, and $\sigma$ is the activation function. A neural network describes the connection of a collection of nodes (neurons) sitting in successive layers. The output neurons in each layer are simultaneously the input neurons in the next layer. The data propagate from the input layer to the output layer through hidden layers. The neurons can be switched on or off as the input is propagated forward through the network.

Suppose we are given a collection of sample pairs $\{ \left(x_{j}, y_{j}\right) \}_{j=1}^{N_s}$. The goal is then to find $\theta^{*}$ by solving an optimization problem 
%\begin{align*}
%\theta^{*}=\underset{\theta}{\operatorname{argmin}} \frac{1}{N_s} \sum_{j=1}^{N_s}\left\|y_{j}-\mathcal{N}\left(x_{j} ; \theta\right)\right\|_{2}^{2}
%\end{align*}
\begin{align*}
\theta^{*}=\underset{\theta}{\operatorname{argmin}} \sum_{j}\operatorname{loss}(y_j,\mathcal{N}\left(x_{j} ; \theta\right)), 
\end{align*}
where $N_s$ is the number of samples. Here, the function $\operatorname{loss}(y_j,\mathcal{N}\left(x_{j} ; \theta\right))$ is known as the loss function. One needs to select a suitable number of layers, a suitable number of neurons in each layer, a suitable activation function, the loss function, and the optimizers for the network. 

We will use a deep neural network $\mathcal{N}$ to model the process of determining ray directions for our basis functions. 
Let $\omega$ be the frequency of the high frequency Helmholtz equation (\ref{eq:helmholtz1}) to be solved. Let $\widetilde{\omega}$ be the reduced frequency that we used to determine the ray directions as in the NMLA method. Recall that the basis functions in the ray-based IPDG method is defined by (\ref{eq:basis}), which will need a set of ray directions $\Theta_K$
for each element $K\in\mathcal{T}_H$. We will perform the training process on a generic element $K$, and apply the result to all elements. The resulting neural network is able to predict the ray directions for the use of our proposed method.
We will use functions of the form
$e^{i\sigma \mathbf{d} \cdot\left(\mathrm{x}-\hat{\mathrm{x}}\right)}$ as the input of the training data. 
 
We will choose the random number $\sigma$ uniformly from the range $[\widetilde{\omega}-\delta, \widetilde{\omega}+\delta)$, $\delta >0$,  
and choose $\hat{\mathrm{x}}$ randomly from the element $K$. 
This choice for $\sigma$ is due to the fact that the wave number for the Helmholtz equation with reduced frequency (\ref{eq:redhelmholtz1}) is $\widetilde{\omega}/c$, and we will take $\sigma = \widetilde{\omega}/ c(\hat{\mathrm{x}}_{l,K})$
when we apply the neural network. 
Furthermore, we observe that the outputs that we need are the values of the direction $\mathbf{d}$ which are randomly generated from the unit circle. 

The following summarizes the training settings of our deep neural network:
\begin{itemize}
\item Input: $x=\{\sum_j A_j e^{ik \mathbf{d}_j \cdot\left(\mathrm{x}-\hat{\mathrm{x}}_{l,K}\right)}\}$, where we notice that $x$ and $\mathrm{x}$ are different notations. Here $\hat{\mathrm{x}}_{l,K}$ is a set of randomly chosen points in $K$
and $\mathbf{d}_j$ is a set of randomly chosen directions of unit length. Note that there are multiple ray directions at each point $\hat{\mathrm{x}}_{l,K}$. 
The functions $A_j$ are bilinear in $K$ with randomly selected nodal point values. The input is therefore a superposition
of plane waves in $K$. 
\item Output: $y=\{\mathbf{d}_j\}$. The output of the network contains the corresponding ray directions $\mathbf{d}_j$ at the point $\hat{\mathrm{x}}_{l,K}$.
Note that we assume that the total number of directions $N$ on each element $K$ is fixed. 

\item Sample pairs: $N_s=10000$ sample pairs of $\left(x_{k}, y_{k}\right)$ are used. Note that 
each $x_k$ is a superposition of plane waves with directions $\{ \mathbf{d}_j \}$ and $y_k$ are the corresponding directions $\{ \mathbf{d}_j\}$.
\item Standard loss function: 
$$\frac{1}{N_s} \sum_{k=1}^{N_s}\left\|y_{k}-\mathcal{N}\left(x_{k} ; \theta\right)\right\|_{2}^{2} \qquad \text{(MSE)}$$
\item Customized loss function: 
$$\frac{1}{N_s} \sum_{k=1}^{N_s}\left\|y_{k}-\mathcal{N}\left(x_{k} ; \theta\right)\right\|_{2}^{2}+\frac{1}{N_s} \sum_{k=1}^{N_s}\sum_j\left|\|y_{k,j}\|_2^2-\|\mathcal{N}\left(x_{k} ; \theta\right)_j\|_2^2\right| ~~ \text{(MSE+norm1)}$$
In the above $y_{k,j}$ denotes the $j$-th direction for the $k$-th training data,
and $\mathcal{N}\left(x_{k} ; \theta\right)_j$ denotes the $j$-th predicted direction for the $k$-th training data.
We will use this customized loss function to improve the performance. Notice that we added a term to normalize the length of the predicted ray directions.
\item Activation function: The popular ReLU function (the rectified linear unit activation function) is a
common choice for activation function in training deep neural network architectures.
\end{itemize}

%As for the input of the network, we use $\sum_j A_j e^{ik \mathbf{d}_j \cdot\left(\mathrm{x}-\hat{\mathrm{x}}_{l,K}\right)}$, which are the vectors containing basis in the ray-based IPDG method (\ref{eq:basis}). The idea is to extract the directions of basis from the reduced frequency solution of the high frequency Helmholtz equation and then use the predicted directions to form the basis of ray-based IPDG to solve high frequency Helmholtz equation. Also, we divide the domain into small regions so the number $\widetilde{\omega}/ c(\hat{\mathrm{x}}_{l,K})$ is nearly constant. Therefore, we use the randomly chosen number $k$ to replace the above number $\widetilde{\omega}/ c(\hat{\mathrm{x}}_{l,K})$.

%The corresponding output data are $y=\{\mathbf{d_j}\}_j$, which contains the values of the directions $\mathbf{d}_j$ at the centre  $\hat{\mathrm{x}}_{l,K}$ of the small region. 
%In order to obtain the data $x$ and $y$, 
%we use randomly generated directions $\mathbf{d}$, centre $\hat{\mathrm{x}}_{l,K}$ and number $k$ to calculate  the basis $e^{ik \mathbf{d} \cdot\left(\mathrm{x}-\hat{\mathrm{x}}_{l,K}\right)}$. Any reduced frequency solution can be formed by the above basis $e^{ik \mathbf{d} \cdot\left(\mathrm{x}-\hat{\mathrm{x}}_{l,K}\right)}$, so the above neural network can predict the directions $\{\mathbf{d}_j\}_j$ from the reduced frequency solution.

In between the input and output layers, we use $M-1$ convolution layers with a constant rate of kernel size $3$ with batch-normalize activations on given parameters and pooling operation with Max pooling filter and
a constant rate of kernel size $2$. The neural network follows with a fully connected hidden layer and finally 
an output layer. The details of neural network is shown in Algorithm \ref{alg:NN}, and a schematic is shown in Figure~\ref{fig:net}. 
As for the training optimizer, we use AdaMax, which is a stochastic gradient descent (SGD) type algorithm well-suited for high-dimensional parameter space, in minimizing the loss function.

In Algorithm \ref{alg:NN}, the neural network takes an input function $\mathbf{u}$, which is defined on an element $K$.
The output of the algorithm is a set of ray directions $\{ \mathbf{d}_j\}$.
Thus, our neural network will learn the ray directions from the wave field. 
This is our Algorithm \ref{alg:learn}.
The algorithm takes a global wave field as input. Then the restriction of the wave field in each element $K_k$, $k=1,2,\cdots, N_E$, is entered into the neural network (Algorithm \ref{alg:NN}), which, in turn, returns the local ray directions. The output of the Algorithm \ref{alg:learn} is the set of all ray directions. 

\begin{figure}[!ht]
\centering
\includegraphics[scale=0.4]{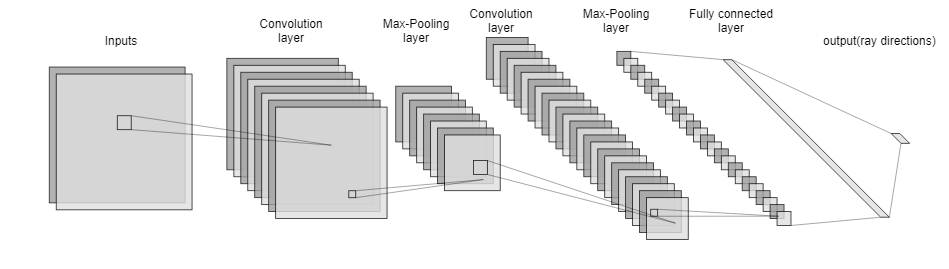}
\caption{A schematic for our deep neural network (Algorithm \ref{alg:NN}).}
\label{fig:net}
\end{figure}

%In between layers, we need the activation function. The ReLU function (rectified linear unit activation function) is a popular choice for activation function in training deep neural network architectures. And we will use the standard loss function: $\frac{1}{N_s} \sum_{j=1}^{N_s}\left\|y_{j}-\mathcal{N}\left(x_{j} ; \theta\right)\right\|_{2}^{2}$. Also, we tried the customize loss function: $$\frac{1}{N_s} \sum_{k=1}^{N_s}\left\|y_{k}-\mathcal{N}\left(x_{k} ; \theta\right)\right\|_{2}^{2}+\frac{1}{N_s} \sum_{k=1}^{N_s}\sum_j\left|\|y_{k,j}\|_2^2-\|\mathcal{N}\left(x_{j} ; \theta\right)_j\|_2^2\right|.$$ The above customize loss function will add some properties to the solution of deep neural network. The norm of new solution is much closer to that of input than the neural network with standard loss function. 

\vspace{2mm}
\begin{algorithm}[H]
\DontPrintSemicolon
\SetKwProg{Fn}{Function}{:}{}
  \SetAlgoLined
  \Fn{$\left\{\mathbf{d}_{j}\right\}_{j=1}^{N}=\text{NN}\left( \mathbf{u} \right)$}{
  $F_1=\mathbf{u}$\;
  \For{ $k = 1:M-1$}{
  $F_{bk}=\text{BatchNorm}(F_k)$ \tcp*{batch-normalize activations} \;
  $F_{ck}=\text{Conv}(F_{bk})$ \tcp*{convolution operator} \;
  $F_{k+1}=\text{Pool}(F_{ck})$\; \tcp*{pooling operation with Max pooling filter}
  }
  $F_f=\text{flat}(F_{L+1})$ \tcp*{flatten layer} \;
  $F_{fc}=\text{FConn}(F_{f})$  \tcp*{fully connected layer} \;
  $\left\{\mathbf{d}_{j} \right\}_{j=1}^{N}=\text{output}(F_{fc})$   \tcp*{output layer} \;
}
  \caption{Neural Network}
  \label{alg:NN}
\end{algorithm}
\vspace{2mm}

%We divide the domain $\Omega$ into small region $\Omega_k$ where $\cup_k\Omega_k = \Omega$. Let the centre of this region be $\hat{\mathrm{x}}_{l,K}$. Then each small region solution $\widetilde{u}|_{\Omega_k}$will used for the above neural network to predict the directions  $\mathbf{d}_{\omega, h}^{j}|_{\Omega_k}$ at centre of each regions. Thus we get a set of directions $\left\{\left\{\mathbf{d}_{\omega, h}^{j}|_{\Omega_k}\right\}_{j=1}^{N_H}\right\}_{k=1}^{N_\Omega}$ of basis used in ray-IPDG method, which will be interpolated to the whole region $\Omega$ to calculate the basis. 

\vspace{5mm}
\begin{algorithm}[H]
\DontPrintSemicolon
\SetKwProg{Fn}{Function}{:}{}
  \SetAlgoLined
  \Fn{$\left\{\left\{\mathbf{d}_{j,K_k} \right\}_{j=1}^{N}\right\}_{k=1}^{N_E}=\text{RAYLEARNING}\left( \mathbf{u}\right)$}{
  \For{ $k = 1:N_{E}$}{
  $\left\{\mathbf{d}_{j,K_k} \right\}_{j=1}^{N}=\text{NN}(\mathbf{u}|_{K_k})$  \;
  \tcp*{CNN followed by fully connected hidden layer} 
}
\KwRet\ {$\left\{\left\{\mathbf{d}_{j,N_k} \right\}_{j=1}^{N}\right\}_{k=1}^{N_\Omega}$} \;
 }
  \caption{Ray Learning}
  \label{alg:learn}
\end{algorithm}
\vspace{3mm}

%\subsubsection{Removing redundant ray directions} 

%\begin{remark}

\noindent
{\it Remark}:
In practice, it is desirable to learn the number of ray directions instead of fixing it. 
To do so, we will set an upper bound of the number of ray directions. That is, $N$ is the upper bound of the number of ray directions. 
In the case that the solution has fewer ray directions, we will apply the singular value decomposition
to remove the redundant directions. We will illustrate this in Section~\ref{sec:svd}.

%\end{remark}

%Since the number of directions in the neural network model is fixed for each models, we use singular value decomposition to determine the number. First, the $N_H$'s directions neural network will make a prediction for a set of directions. The singular value decomposition is used in the set of directions to find the eigenvalues of them. Thus, we will follow the result of the behaviour of the eigenvalues to determine the actual number of directions for the reduced frequency Helmholtz equation. 

\subsection{The Ray-based IPDG method} 

In order to learn the ray directions, 
we first solve the Helmholtz equation (\ref{eq:redhelmholtz1}) with the reduced frequency $\widetilde{\omega}$. 
%We assume that the reduce frequency $\widetilde{\omega}$ is chosen such that 
Then we use Algorithm \ref{alg:learn} with
the reduced frequency solution $\widetilde{u}$ to learn the ray directions for each coarse element. The resulting ray directions
will be used in our ray-based IPDG method to solve the high frequency Helmholtz equation (\ref{eq:helmholtz1}).

%is divided into $N_{\Omega}$ small region and put into the neural network to extract the directions of basis. The finalize basis will be used as the basis of the ray-based IPDG method to solve the high frequency Helmholtz equation.

First of all, we need the standard IPDG method to solve the reduced frequency Helmholtz equation (\ref{eq:redhelmholtz1}). 
%\begin{align}
%\label{eq:helmholtz2red}
%-\nabla^{2} \widetilde{u}-(\widetilde{\omega} / c)^{2} \widetilde{u}=f \quad \text { in } \Omega
%\end{align}
We denote the reduced frequency Helmholtz solution as $\widetilde{u}$ and the detailed implementation of the standard IPDG is shown in Algorithm \ref{alg:fem}.
%Then for each region $\Omega_k$, we embed the reduced frequency Helmholtz solution into the small regions and get the desired input $\widetilde{u}_{\Omega_k}$ for the neural network model.
Here, we use $\mathcal{B}$ to denote the IPDG bilinear form and $\mathcal{F}$ to denote the source terms. 
In addition, $\{ \varphi_j \}$ denotes the standard IPDG basis functions, and $N_{\Omega}$ denotes the number of basis functions. 

\vspace{5mm}
\begin{algorithm}[H]
\DontPrintSemicolon
\SetKwProg{Fn}{Function}{:}{}
  \SetAlgoLined
  \Fn{$\mathbf{u}_{\omega}=\text{S-IPDG}(\omega, h, c, f, g)$}{
    \For{$i,j=1:N_{\Omega}$}{
    $\mathbf{H}_{i j}=\mathcal{B}\left(\varphi_{i}, \varphi_{j}\right)$ \tcp*{Assemble Helmholtz matrix} 
    $\mathbf{b}_{j}=\mathcal{F}\left(\varphi_{j}\right)$ \tcp*{Assemble right-hand side}
  }
  $\mathbf{u}_{\omega}=\mathbf{H}^{-1}\mathbf{b}$ \tcp*{Solve linear system}
\KwRet\ $\mathbf{u}_{\omega}$ \;
  }
  \caption{Standard IPDG Helmholtz Solver}
  \label{alg:fem}
\end{algorithm}
\vspace{3mm}

Once the ray directions for all elements have been computed, we then construct the ray-IPDG space $V_H$.  Next, we will introduce the details of the ray-IPDG method, which is implemented in Algorithm \ref{alg:rayipdg}.
We note that the algorithm takes, among other quantities, the ray directions $\left\{d_{l,K_j}\right\}_{l=1,j=1}^{N_j,N_E}$ as input.
Here, we recall again that $N_E$ is the number of elements. To simplify the notations, we use $N_j$ to denote the number of ray directions in the element $K_j$.
%Notice that $N_j$ is the product of $L$ and the total number of ray directions at
Notice that, since each nodal point $\hat{\mathrm{x}}_{l,K_j}$ can have multiple ray directions, 
$\hat{\mathrm{x}}_{l,K_j}$ and $\hat{\mathrm{x}}_{l',K_j}$ can represent the same nodal point when $l \ne l'$.

\vspace{5mm}
\begin{algorithm}[H]
\DontPrintSemicolon
\SetKwProg{Fn}{Function}{:}{}
  \SetAlgoLined
  \Fn{$\mathbf{u}_{\omega, H}=\text{RAY-IPDG}(\omega, H, c, f, g,\left\{d_{l,K_j}\right\}_{l=1,j=1}^{N_j,N_E})$}{
  $N_{\text{dof}}=0$\;
    \For{$j=1:N_E, l=1:N_j, k=1:2^d$}{
    $N_{d o f}=N_{d o f}+1, m=N_{d o f}$\;
    $\psi_{m}(\mathrm{x})=\varphi_{k}(\mathrm{x}) e^{i \omega / c\left(\hat{\mathrm{x}}_{l,K_j}\right) \mathbf{d}_{l,K_j} \cdot \mathrm{x}}$ \tcp*{Construct ray-IPDG basis} 
    $\widehat{\psi}_m=\psi_{m}\left(\hat{\mathrm{x}}_{l,K_j}\right)$ \tcp*{Nodal values of ray-IPDG basis}
  }
  \For{$m,n=1:N_{dof}$}{
    $\mathbf{H}_{m, n}=\mathcal{B}\left(\psi_{m}, \psi_{n}\right)$ \tcp*{Assemble Helmholtz matrix} 
    $\mathbf{b}_{n}=\mathcal{F}\left(\psi_{n}\right)$ \tcp*{Assemble right-hand side}
  }
  $\mathbf{v}=\mathbf{H}^{-1}\mathbf{b}$ \tcp*{Coefficients of ray-IPDG basis}
  $\mathbf{u}_{\omega, H}=\mathbf{v} \cdot \widehat{\psi}$ \tcp*{Ray-IPDG solution on mesh nodes}
\KwRet\ $\mathbf{u}_{\omega,H}$ \;
  }
  \caption{Ray-IPDG Helmholtz Solver}
  \label{alg:rayipdg}
\end{algorithm}
\vspace{3mm}

Finally, our ray-based IPDG high-frequency IPDG Helmholtz solver is formed by the above ray-IPDG method and the deep neural network, which is presented in Algorithm \ref{alg:itray}. The accuracy of the solution computed by Algorithm \ref{alg:rayipdg} using ray-IPDG depends on the accuracy of ray directions computed by the neural network model.

\vspace{5mm}
\begin{algorithm}[H]
\DontPrintSemicolon
\SetKwProg{Fn}{Function}{:}{}
  \SetAlgoLined
  \Fn{$\mathbf{u}_{H}=\text{DEEP-RAY-IPDG}(\omega, H, c, f, g)$}{
  $\tilde{\omega} \sim \sqrt{\omega}, H \sim \omega^{-1}, h \sim \omega^{-1}$\;
    $\mathbf{u}_{\widetilde{\omega},}=\text{S-IPDG}(\widetilde{\omega}, h, c, f, g)$ \tcp*{Low-frequency waves} 
    $\mathbf{d}:=\left\{d_{l,K_j}\right\}_{l=1,j=1}^{N_j,N_E} =\text{RAYLEARNING}\left(\mathbf{u}_{\widetilde{\omega}}\right)$ \tcp*{Low-freq ray learning}
    $\mathbf{u}_{\omega, H}=\text{RAY-IPDG}\left(\omega, H, c, f, g,\mathbf{d}\right)$  \tcp*{High-frequency waves}
%$\text {tol}=1, \text { niter }=0, \mathbf{u}_{\omega, h}^{1}=\mathbf{u}_{\mathrm{d}_{\tilde{\omega}}, h}$\;
%  \While{$\text {tol}>\epsilon \text { or niter }>\text {max\_iter }$}{
%    $\left\{\boldsymbol{d}_{\omega, h}\right\}=\text{RayLearning}\left(\omega, h, c, \mathbf{u}_{\omega, h}^1\right)$ \tcp*{Hign-freq ray learn} 
%    $\mathbf{u}_{\mathbf{d}_{\omega}, h}=\text{Ray-IPDG}\left(\omega, h, c, f, g,\left\{\mathbf{d}_{\omega, h}\right\}\right)$\;
%     $\mathbf{u}_{\omega, h}^{2}=\mathbf{u}_{\mathbf{d}_{\omega}, h}$\;
%     tol$=\left\|\mathbf{u}_{\omega, h}^{1}-\mathbf{u}_{\omega, h}^{2}\right\|_{L^{2}(\Omega)} /\left\|\mathbf{u}_{\omega, h}^{2}\right\|_{L^{2}(\Omega)}$\;
%     $\text { niter }=\text { niter }+1, \mathbf{u}_{\omega, h}^{1}=\mathbf{u}_{\omega, h}^{2}$
%  }
\KwRet\ $\mathbf{u}_{\omega,H}$ \;
  }
  \caption{Ray-based IPDG High-Frequency Helmholtz Solver}
  \label{alg:itray}
\end{algorithm}
\vspace{3mm}

\section{Numerical examples}
In this section, we will present some numerical examples to show the performance of our proposed deep learning based IPDG high-frequency Helmholtz equation solver using ray-based basis functions. In our 2D simulations, we will take 
\begin{align*}
\omega=2^{3} \times 10 \pi, \quad \tilde{\omega}=\sqrt{2^{3}} \times 10 \pi ,
\end{align*}
In 3D simulations, we let
\begin{align*}
\omega=2^{2} \times 10 \pi, \quad \tilde{\omega}=\sqrt{2^{2}} \times 10 \pi .
\end{align*}
We set the computational domain $\Omega = [0,1 ]^2$  in Examples 1-5.
Also, the wave speed for Examples 1-5 is set as $c = 1$ and the source is set as $f=0$. The impedance boundary condition is used for Examples 1-5: 
\begin{align*}
\nabla u \cdot n+i(\omega / c) u=g \quad \text { on } \partial \Omega.
\end{align*}
For Examples 6 and 7, we solve the problem in the computational domain $\Omega^{\prime \prime} = [0.25,0.75 ]^2$ supplemented with the Cauchy boundary condition on $x_{2}=0.25$ and $\mathrm{PML}$ conditions on the other three sides. We will consider inhomogeneous sound speeds for these two examples. 
We will also present some 3D test cases in Example 8, where the impedance boundary condition is considered. 
We will compare the performance by using the NMLA method and our ray-based IPDG method. 
%In this case, the error will only come from the approximation. In addition, we will present the performance of our method with the use of deep neural network to extract the directions of basis. In this case, the error in this approximation will be added to the solutions. 

In Examples 1-5, the computational time for the NMLA method is  3.265s and that for our deep learning based method is 0.120s. By using the NVIDIA TensorRT which optimizes the network by combining layers and optimizing kernel selection for improved latency, throughput, power efficiency, and memory consumption, the time consumed for the deep learning based method will decrease to 0.002s. We can see an improvement to computational efficiency.

%\newpage
\subsection{Example 1}
For the first numerical example, we will take the wave field as
\begin{align*}
u_1=e^{i\omega x_1}.
\end{align*}
The computational domain is divided into a $40\times 40$ grid, that is, $H=1/40$. For each element, we consider a finer grid of $4\times 4$.
This finer grid is used to define the reduced frequency solution.
Figure~\ref{fig:expm1} shows a plot of exact directions, approximate directions by the neural network, the reduced frequency IPDG solution, and the high frequency ray-IPDG solution from the approximate directions. 
Note that we only show the ray directions at some selected points for clarity of presentation. 

\begin{figure*}[!h]
\centering
\includegraphics[width=1\linewidth]{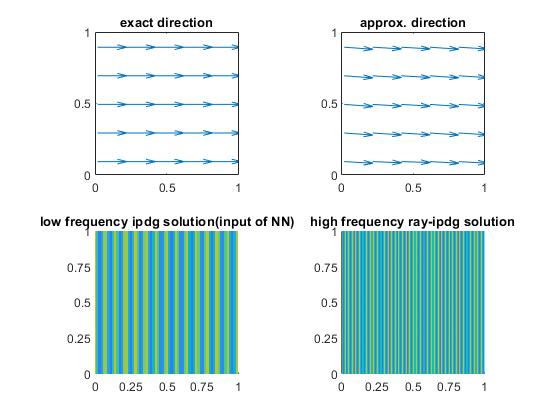}
\captionof{figure}{Example 1: ray-directions and solutions.}
\label{fig:expm1}
\end{figure*}

Table \ref{fig:table1} shows the relative errors of directions and IPDG solutions from the NMLA and the neural network method. 
Our neural network is trained to learn one ray direction for each element. 
The relative error of ray directions from the NMLA method is 0.1418.
The relative error of ray directions from our neural network method is 0.07605. In terms of the solution, 
the relative error is 0.03296 for the NMLA method. In addition,
the relative error is 0.00967 for the neural network method. 
We also observe that the loss function including the length of the ray directions gives better results.

\begin {table}[h]
 \caption{Root mean square error for Example 1}
  \label{fig:table1}
\begin{center}
 \begin{tabular}{|c|c| c|} 
 \hline
&relative error of directions  & relative error of solutions \\ [0.5ex] 
 \hline\hline
NMLA & 0.1418  &0.03296 \\ 
 \hline
Neural Network(mse+norm1) & 0.07605  &0.00967\\ 
 \hline
 Neural Network(mse) & 0.08687  &0.01693\\ 
 \hline
\end{tabular}
\end{center}
\end {table}

%\newpage
\subsection{Example 2}
For the second numerical example, we will take the wave field as
\begin{align*}
u_2=e^{i\omega x_1}+e^{i\omega x_2}.
\end{align*}
%he domain is divided into $20\times 20$ small region to compute the directions. For each small region, the grid is $8\times 8$ wide. 
The grid is the same as that of Example 1. 
Figure~\ref{fig:expm2} shows a plot of exact directions, approximate directions by the neural network, the reduced frequency IPDG solution, and the high frequency ray-IPDG solution from the approximate directions. 

\begin{figure*}[!ht]
\centering
\includegraphics[width=1\linewidth]{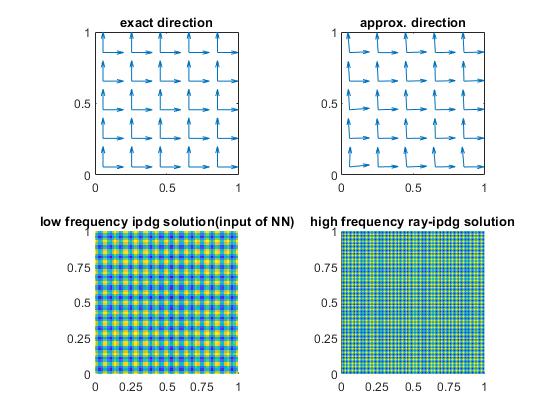}
\captionof{figure}{Example 2: ray directions and solutions.}
\label{fig:expm2}
\end{figure*}

Table \ref{fig:table2} shows the relative errors of ray directions and solutions from the NMLA and our neural network method. 
Our neural network is trained to learn two ray directions in each element. 
The relative error of ray directions from the NMLA method is 0.1418, and
the relative error of ray directions from our neural network method is 0.05782. Moreover, 
the relative error of the solution from the NMLA method is 0.01979, while
the relative error  from our neural network method is 0.00369. We also observe that the customized loss function gives better performance. 

\begin {table}[h]
 \caption{Root mean square error for Example 2}
  \label{fig:table2}
\begin{center}
 \begin{tabular}{|c|c| c|} 
 \hline
&relative error of directions  & relative error of solutions \\ [0.5ex] 
 \hline\hline
NMLA & 0.1418  &0.01979 \\ 
 \hline
Neural Network(mse+norm1) & 0.05782  &0.00369\\ 
 \hline
 Neural Network(mse) & 0.08257  &0.00913\\ 
 \hline
\end{tabular}
\end{center}
\end {table}

%\newpage
\subsection{Example 3}

For the third numerical example, we will take the wave field as
\begin{align*}
u_{3}=\sqrt{\omega} H_{0}^{(1)}\left(\omega\left|\mathrm{x}-\mathrm{x}_{0}\right|\right)
\end{align*}
where $\mathrm{x}_{0}=(2,2)$. %The domain is divided into $20\times 20$ small region to compute the directions. For each small region, the grid is $8\times 8$ wide. 
The grid size is the same as that of Example 1. 
Figure~\ref{fig:expm2} shows a plot of exact directions, approximate directions by the neural network, the reduced frequency IPDG solution, and the high frequency ray-IPDG solution from the approximate directions.

\begin{figure*}[!ht]
\centering
\includegraphics[width=1\linewidth]{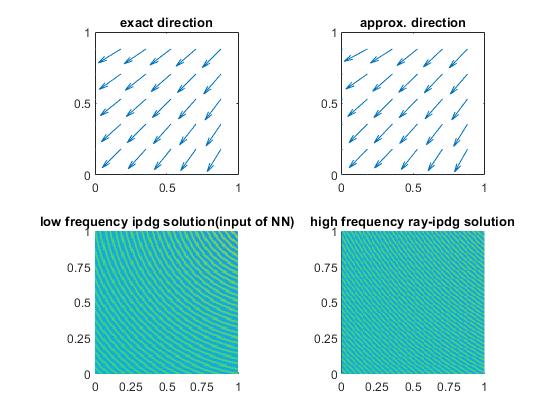}
\captionof{figure}{Example 3: ray directions and solutions.}
\label{fig:expm3}
\end{figure*}

Table \ref{fig:table3} shows the relative errors of ray directions and the solutions from both the NMLA and our method. In particular, the relative errors of ray direction from the NMLA method is 0.07571,
while that from our method is 0.04347.
%The relative error of direction from NN method is 0.04347
Also, the relative error for the solution is 0.01197 and 0.00272 for the NMLA method and our method respectively.
%The relative error of high frequency ray ipdg solution from NMLA method is 0.01197
%The relative error of high frequency ray ipdg solution from NN method is 0.00272
We again observe that our method is able to give accurate approximation solution. We remark that our network is trained to learn one ray direction in each element.

\begin {table}[h]
 \caption{Root mean square error for Example 3}
  \label{fig:table3}
\begin{center}
 \begin{tabular}{|c|c| c|} 
 \hline
&relative error of directions  & relative error of solutions \\ [0.5ex] 
 \hline\hline
NMLA & 0.07571  &0.01197 \\ 
 \hline
Neural Network(mse+norm1) & 0.04347  &0.00272\\ 
 \hline
 Neural Network(mse) & 0.04996  &0.00538\\ 
 \hline
\end{tabular}
\end{center}
\end {table}

%\clearpage
\subsection{Example 4}
For the fourth numerical example, we will take the wave field as
\begin{align*}
u_{3}=\sqrt{\omega} H_{0}^{(1)}\left(\omega\left|\mathrm{x}-\mathrm{x}_{0,1}\right|\right)+0.5\sqrt{\omega} H_{0}^{(1)}\left(\omega\left|\mathrm{x}-\mathrm{x}_{0,2}\right|\right)
\end{align*}
where $\mathrm{x}_{0,1}=(2,2)$ and $\mathrm{x}_{0,2}=(-0.5,2)$. We use the same grid setting as in Example 1. 
%The domain is divided into $20\times 20$ small region to compute the directions. For each small region, the grid is $8\times 8$ wide. 
Figure~\ref{fig:expm4} shows a plot of exact directions, approximate directions by the neural network, the reduced frequency IPDG solution, and the high frequency ray-IPDG solution from the approximate directions.

\begin{figure*}[!ht]
\centering
\includegraphics[width=0.95\linewidth]{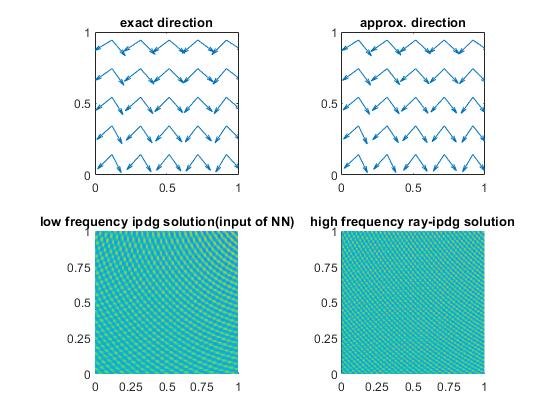}
\captionof{figure}{Example 4: ray directions and solutions.}
\label{fig:expm4}
\end{figure*}

Table \ref{fig:table4} shows the relative error of ray directions and numerical solutions from the NMLA and our method. In particular, the relative error of ray directions from the NMLA method is 0.1048, while the relative error of ray directions from our method is 0.07637. In addition, the relative errors of the approximate solutions from the NMLA method and our method are 0.01183 and 0.00333, respectively. We observe that our method is able to give an accurate solution efficiently. 
We remark that the neural network is trained to learn two ray directions in each element. 
%The relative error of high frequency ray ipdg solution from NN method is 0.00333

\begin {table}[h]
 \caption{Root mean square error for Example 4}
  \label{fig:table4}
\begin{center}
 \begin{tabular}{|c|c| c|} 
 \hline
&relative error of directions  & relative error of solutions \\ [0.5ex] 
 \hline\hline
NMLA & 0.1048  &0.01183 \\ 
 \hline
Neural Network(mse+norm1) & 0.07637  &0.00333\\ 
 \hline
 Neural Network(mse) & 0.05549  &0.00242\\ 
 \hline
\end{tabular}
\end{center}
\end {table}

%\newpage
\subsection{Example 5}
\label{sec:svd}

This section is devoted to test our method for predicting the number of ray directions for each element. 
We will repeat Examples 1 and 3. 
However, instead of specifying the number of ray directions, we only specify a maximum number of directions.
Then we use our deep neural network to predict the directions, and we then use the SVD to determine the number of ray directions by eliminating redundant directions. 

%\subsubsection{Example 5.1}
We will repeat the Example 1. 
We will test the two different neural networks. 
One of them will learn two ray directions in each element, and the other will learn four ray directions in each element. 
Then the SVD is applied to remove redundant ray directions by considering magnitude of singular values.
%apply a 2's directions and 4's directions neural network to the Example 1 to observe the behaviour of eigenvalues. 
For the neural network learning two ray directions, the energy of the first singular vector is $98.94\%$, where the energy is defined using singular values. 
%The percentage of first eigenvalue of 2's directions neural network is 98.94\%. 
For the neural network learning four ray directions, we present the eigenvalues in Figure~\ref{fig:expm51}.
We observe that the first eigenvector carries most of the energy, and it shows that the proposed technique is able to give one ray direction.
%We can observe from Figure \ref{fig:expm51} that after second eigenvalue, the percentages of eigenvalues are very small so we choose one direction for the Example 1 by the elbow method.
\begin{figure*}[!ht]
\centering
\includegraphics[width=0.7\linewidth]{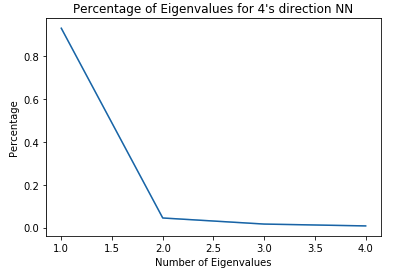}
\captionof{figure}{Percentage of Eigenvalues for Example 1.}
\label{fig:expm51}
\end{figure*}

%\subsubsection{Example 5.3}
We will also repeat Example 3. 
For the neural network learning two ray directions, the energy of the first singular vector is $96.65\%$, where the energy is defined using singular values. 
For the neural network learning four ray directions, we present the eigenvalues in Figure~\ref{fig:expm53}.
We observe that the first eigenvector carries most of the energy, and it shows that the proposed technique is able to give one ray direction.

%We will apply a 2's directions and 4's directions neural network to the Example 3 to observe the behaviour of eigenvalues.  The percentage of first eigenvalue of 2's directions neural network is 96.65\%
%We can observe from Figure \ref{fig:expm53} that after second eigenvalue, the percentages of eigenvalues are very small so we choose one direction for the Example 3 by the elbow method.
\begin{figure*}[!ht]
\centering
\includegraphics[width=0.7\linewidth]{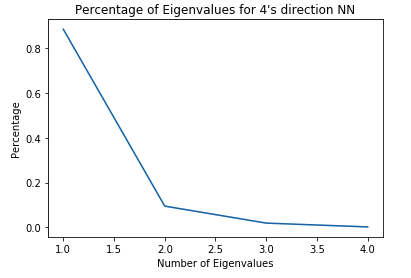}
\captionof{figure}{Percentage of Eigenvalues for Example 3.}
\label{fig:expm53}
\end{figure*}

%\newpage
\subsection{Example 6}

In our next numerical example, we will take the wave field as
$u_{6}=$ the free-space solution with wave speed $c_6 = 1-0.5 e^{-100\left[(y-0.4)^{2}+(x+0.5 y-0.7)^{2})\right]}$, which is shown in Figure~\ref{fig:expm6} and source 
\begin{align*}
f_6:= 10^4e^{-10^4|\mathrm{x}-\mathrm{x}_0|^2}
\end{align*}
where $\mathrm{x}_{0}=(0.5,0.1)$. The domain is divided into $40\times 40$ coarse grid, that is, $H=1/40$. For each element, we consider a finer grid of $8\times 8$. Figure~\ref{fig:expm6} shows a plot of reference directions predicted by the NMLA method, the approximate ray directions by our neural network, the reduced frequency IPDG solution, and the high frequency ray-IPDG solution using the approximate ray directions. 
We have shown the reduced frequency solution, which is the input of the neural network. 
Moreover, we have shown both the reference solution computed by the NMLA method
as well as the approximate solution computed by our neural network method. We observe good agreement between these two solutions.

\begin{figure*}[!ht]
\centering
\includegraphics[width=1\linewidth]{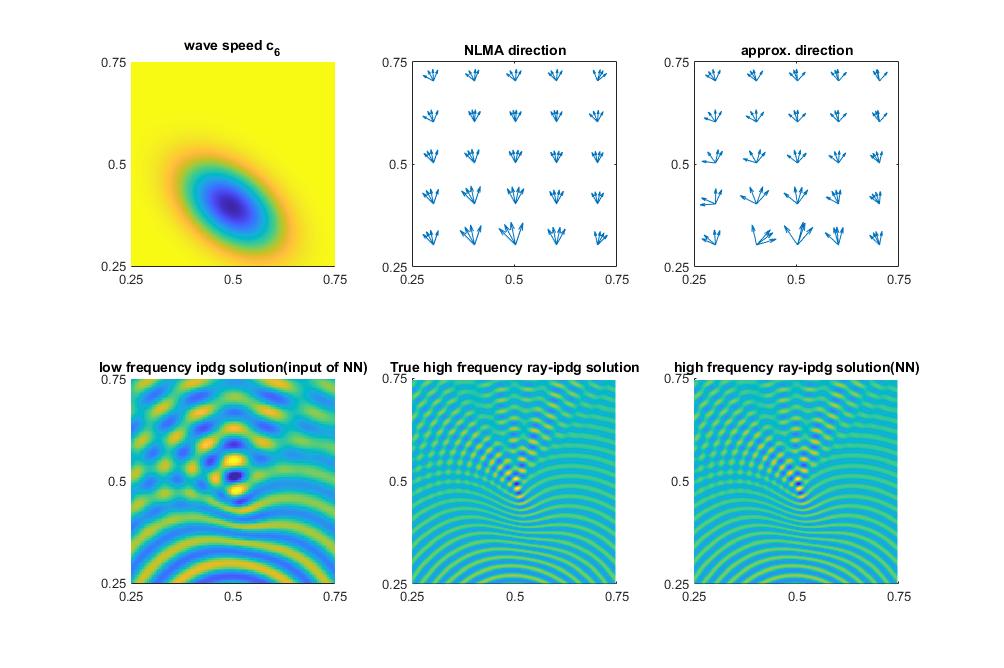}
\captionof{figure}{Example 6: sound speed, ray directions and solutions.}
\label{fig:expm6}
\end{figure*}

Table \ref{fig:table6} shows the relative error of solutions from the NMLA and our neural network methods.
The neural network will learn $4$ ray directions in each element. 
The relative error of the high-frequency ray-IPDG solution from the NMLA method is 0.02723, while that 
from our neural network method is 0.02540. We observe a comparable performance of these two methods. 
We remark that our neural network is able to predict ray directions more efficiently.

\begin {table}[h]
 \caption{Root mean square error for Example 6}
  \label{fig:table6}
\begin{center}
 \begin{tabular}{|c|c| c|} 
 \hline
 & relative error of solutions \\ [0.5ex] 
 \hline\hline
NMLA  &0.02723 \\ 
 \hline
Neural Network (mse+norm1)  &0.02540\\ 
% \hline
% Neural Network(mse) & 0.05549  &0.00242\\ 
 \hline
\end{tabular}
\end{center}
\end {table}

\newpage
\subsection{Example 7}

For the seventh numerical example, we will take the wave field as
$u_{7}=$ the free space solution with wave speed $c_7 =$ a scaled smooth Marmousi model showed in Figure~\ref{fig:expm7}. 
Other settings are the same as Example 6.
%and source 
%\begin{align*}
%f_7:= 10^4e^{-10^4|\mathbf{x}-\mathbf{x}_4|^2}
%\end{align*}
%where $\mathbf{x}_{4}=[0.5,0.1]$. The domain is divided into $40\times 40$ small region to compute the directions. For each small region, the grid is $8\times 8$ wide. 
Figure~\ref{fig:expm7} shows a plot of exact directions, approximate directions by the neural network, the reduced frequency IPDG solution, and the high frequency ray-IPDG solution from the approximate directions. 
We again observe very good agreement between the reference and approximate solutions.

\begin{figure*}[!ht]
\centering
\includegraphics[width=1\linewidth]{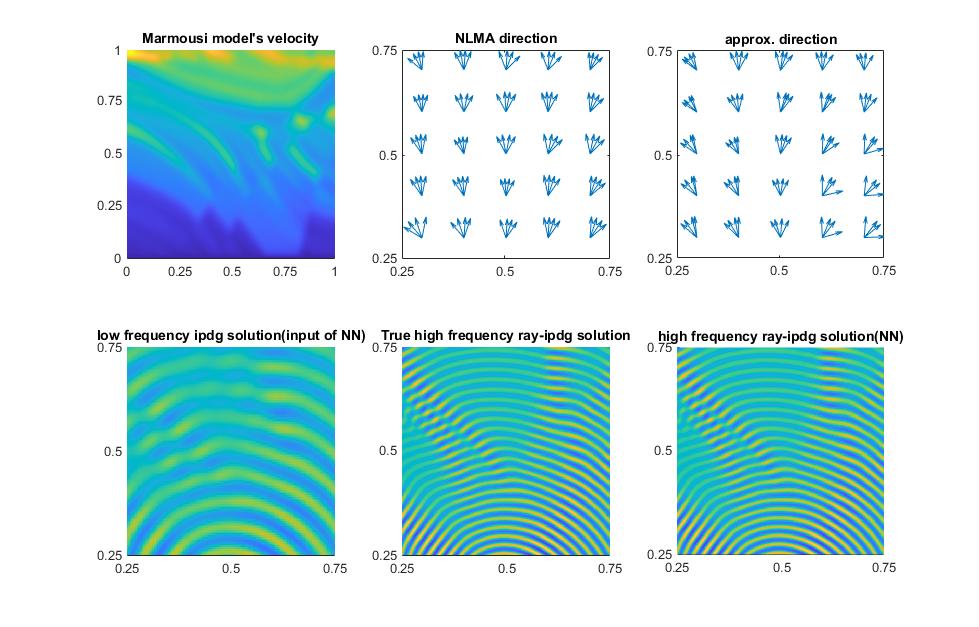}
\captionof{figure}{Example 7: sound speed, ray directions and solutions.}
\label{fig:expm7}
\end{figure*}

Table \ref{fig:table7} shows the relative errors of the solutions from the NMLA and the neural network method.
Our neural network will give $4$ ray directions in each element. 
The relative error of the high frequency ray IPDG solution from the NMLA method is 0.02889, while that from 
our neural network method is 0.03018. We observe that our neural network method gives a reasonable result. We
remark that our method is able to predict the ray directions in a more efficient way. 

\begin {table}[h]
 \caption{Root mean square error for Example 7}
  \label{fig:table7}
\begin{center}
 \begin{tabular}{|c|c| c|} 
 \hline
 & relative error of solutions \\ [0.5ex] 
 \hline\hline
NMLA  &0.02889 \\ 
 \hline
Neural Network (mse+norm1)  &0.03018\\ 
% \hline
% Neural Network (mse) & 0.05549  &0.00242\\ 
 \hline
\end{tabular}
\end{center}
\end {table}

%\newpage
\subsection{3D Examples}
Finally, we consider some 3D numerical examples. 
We will first take the wave field as
\begin{align*}
u_8=e^{i\omega x_1},
\end{align*}
the wave speed $c=1$. The domain $ [0,1] \times [0,0.2] \times [0,0.2] $ is divided 
into $20\times 4\times 4$ grid. Our neural network is designed to learn one ray direction in each element. 

Figure \ref{fig:u8} shows the reduced frequency IPDG solution and the high frequency ray-IPDG solution. 
Note that the reduced frequency solution is used as input of our neural network. 
The relative error of our approximate solution is 0.0399.
\begin{figure}[!htb]
\begin{minipage}{.5\textwidth}
	\centering
	\includegraphics[width=.9\linewidth]{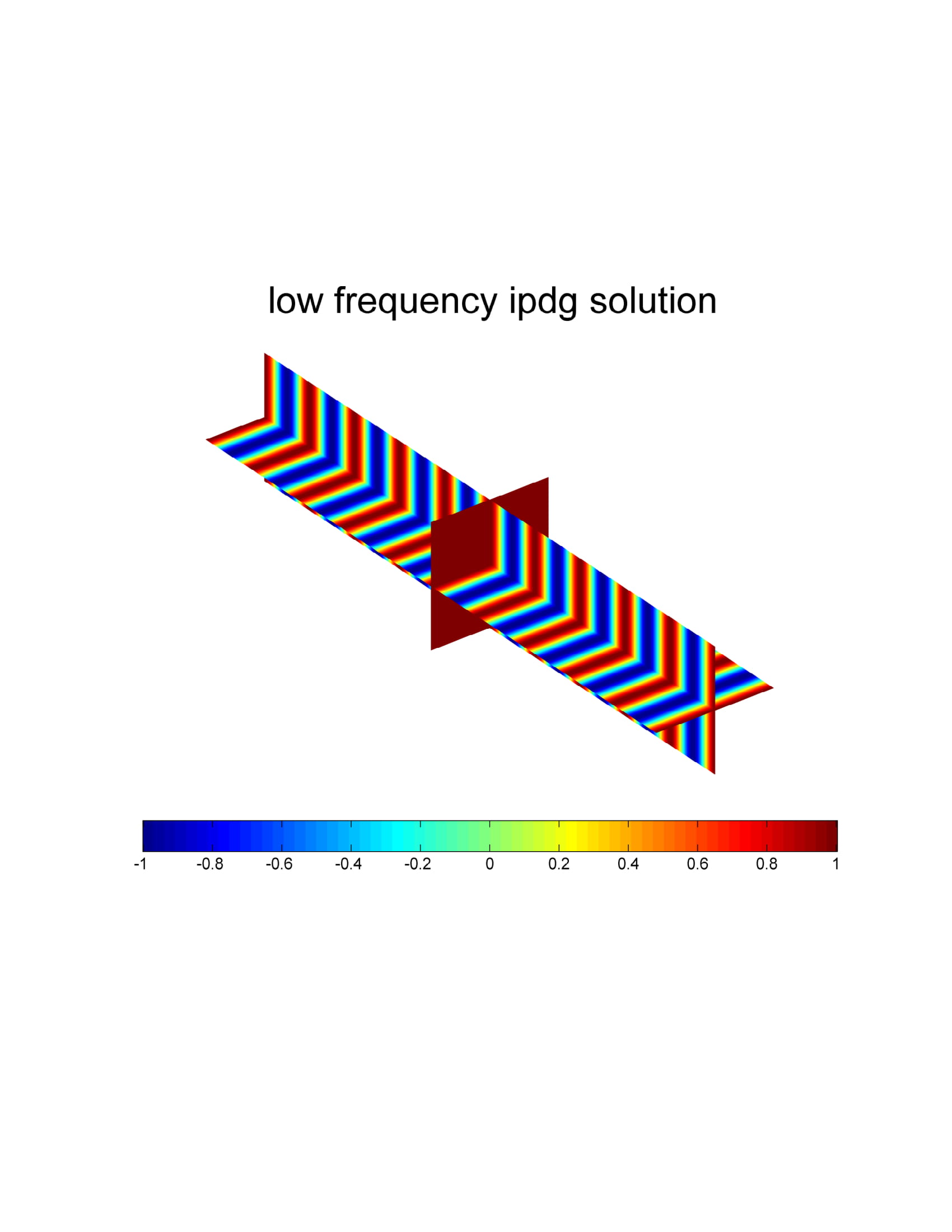}\\
	\includegraphics[width=.9\linewidth]{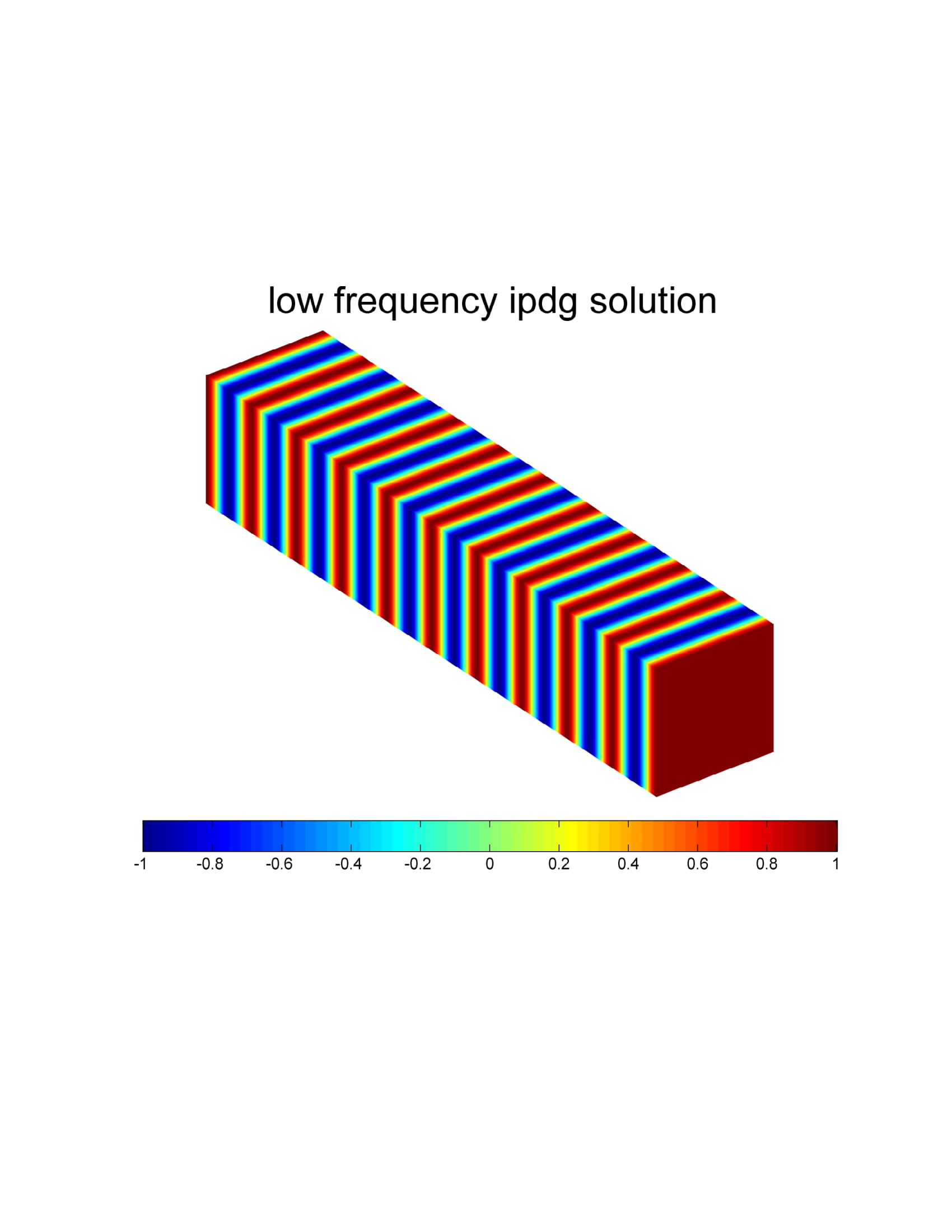}
%	\includegraphics[trim={1cm 6cm 1cm 6cm},clip,width=3in]{hel_nn/ex1_low}
%\includegraphics[trim={1cm 6cm 1cm 6cm},clip,width=3in]{hel_nn/ex1_lowb}
	%	\label{fig:}
%	\label{fig:u8_1}
	\end{minipage}%
	\quad
	\begin{minipage}{.5\textwidth}
%\end{figure}
%\begin{figure}[!htb]
	\centering
	\includegraphics[width=.9\linewidth]{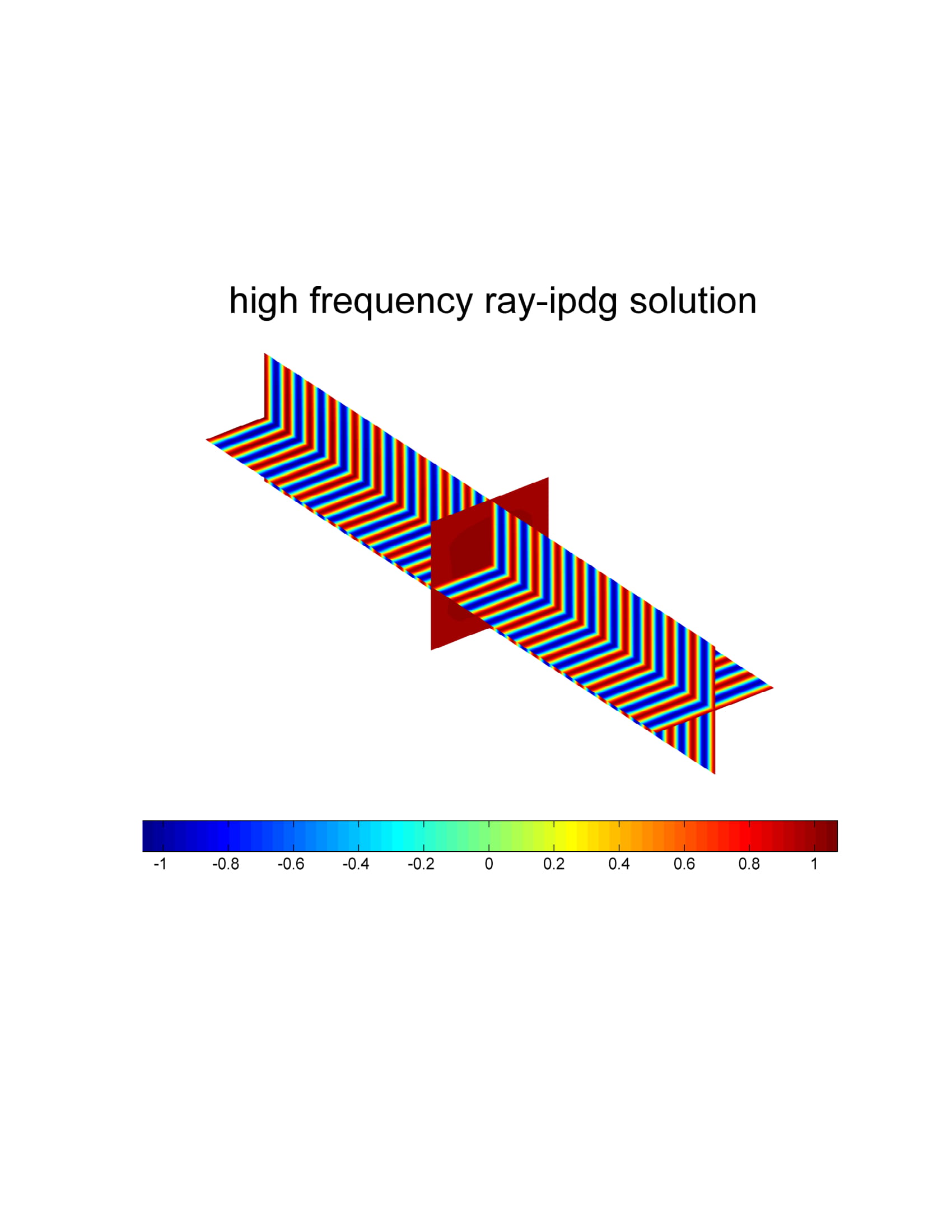}\\
\includegraphics[width=.9\linewidth]{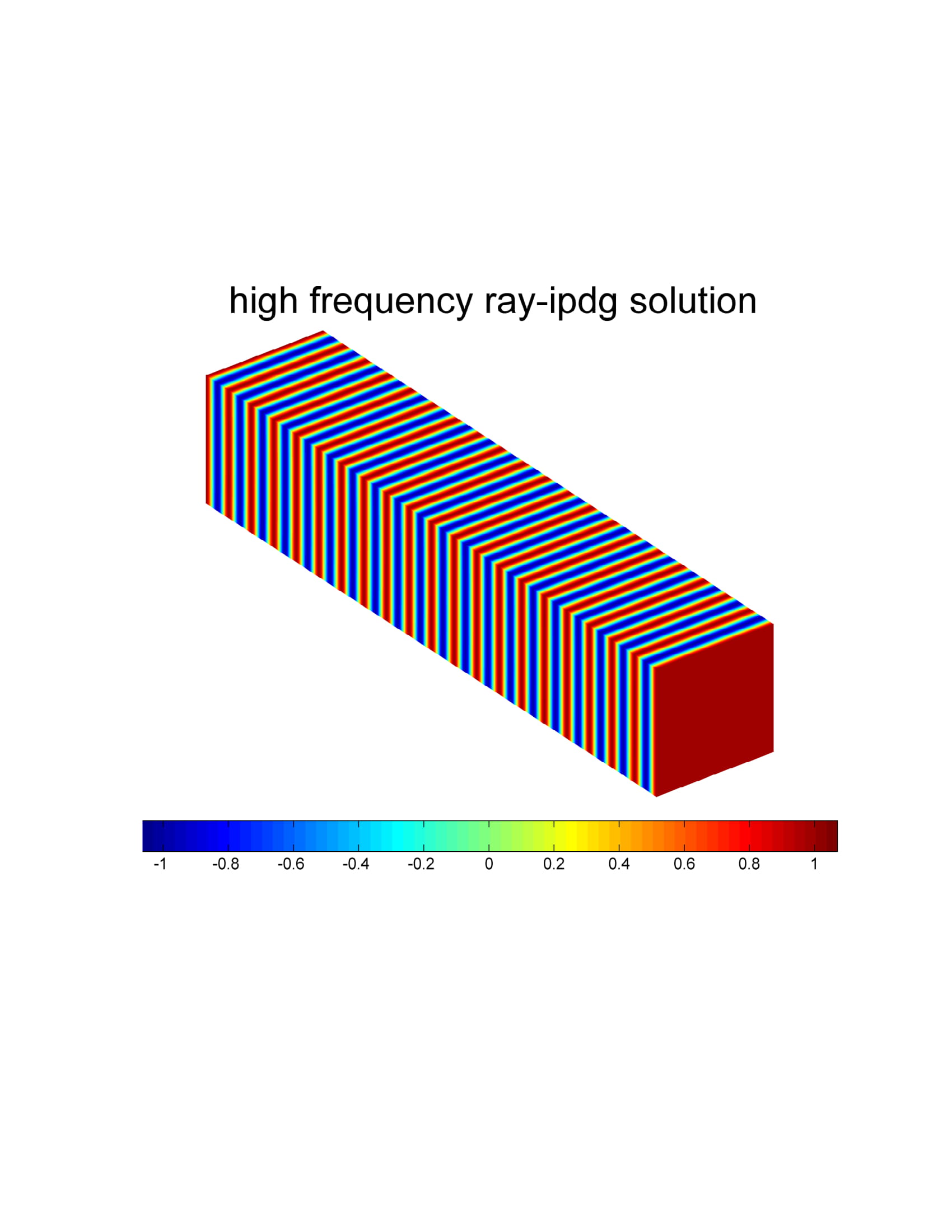}
	%	\label{fig:}

	\end{minipage}%
		\caption{3D example 1, $L^2$ relative error is 0.0399.}
	\label{fig:u8}
\end{figure}

%\clearpage
%\newpage
For the second 3D numerical example, we will take the wave field as
\begin{align*}
u_9=e^{i\omega x_1}+e^{i\omega x_2}+e^{i\omega x_3},
\end{align*}
 the wave speed $c=1$. The domain and mesh are the same as the previous example.
%The domain is $ [0,1] \times [0,0.2] \times [0,0.2] $ and divided into $20\times 4\times 4$ small regions. Let $x_K,H$ be the centre points and grid size of coarse grid $K$. For each region $K$, we use two center point $x_K\pm(H/4,H/4,H/4)$ to find three directions by neural network model, which will compute the basis. 
For this example, we take two points in each element, and our neural network will learn three ray directions in each of these two points. Figure \ref{fig:u9} shows the reduced frequency IPDG solution and the high frequency ray-IPDG solution. The relative error of our approximate solution is 0.0252.

\begin{figure}[!htb]
\begin{minipage}{.5\textwidth}
	\centering
	\includegraphics[width=.9\linewidth]{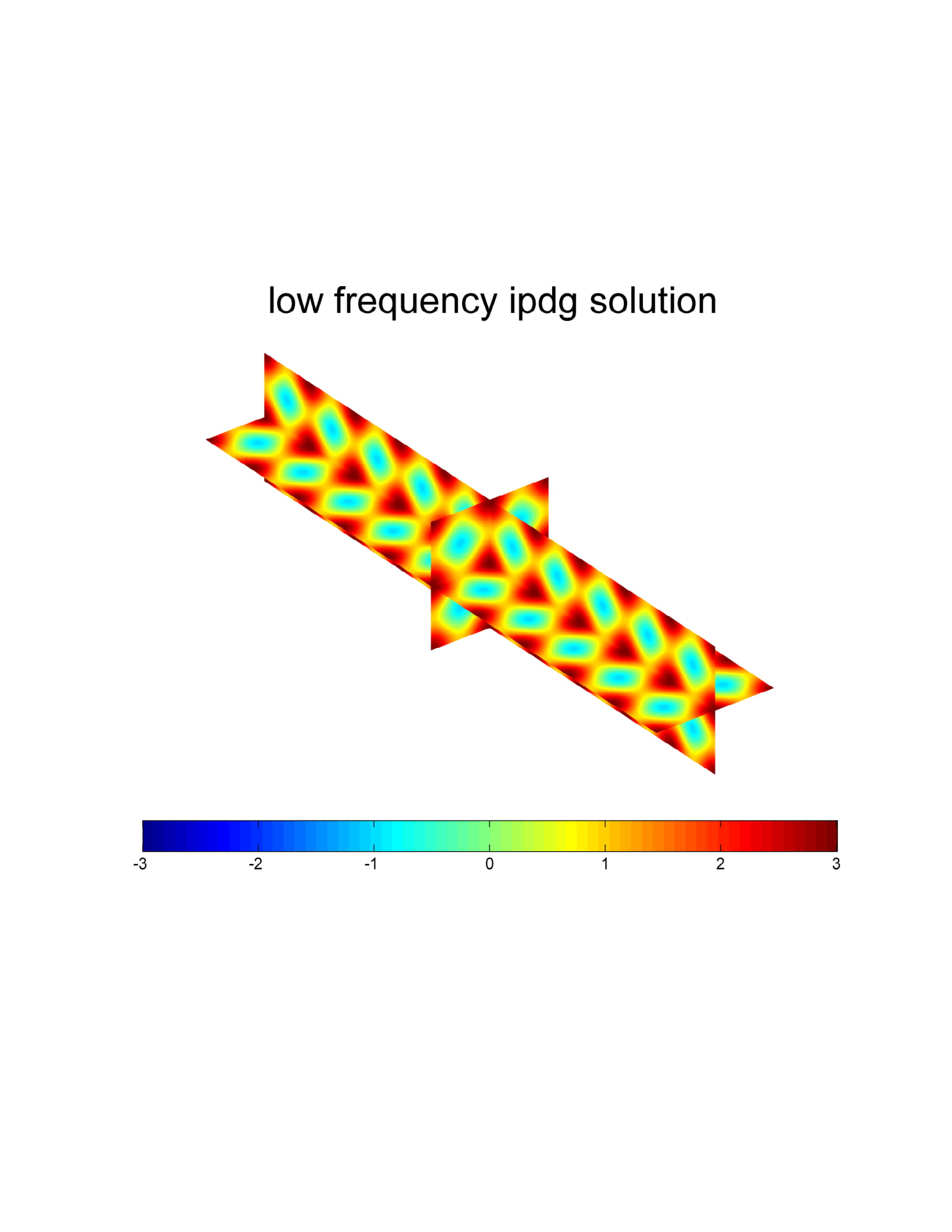}\\
	\includegraphics[width=.9\linewidth]{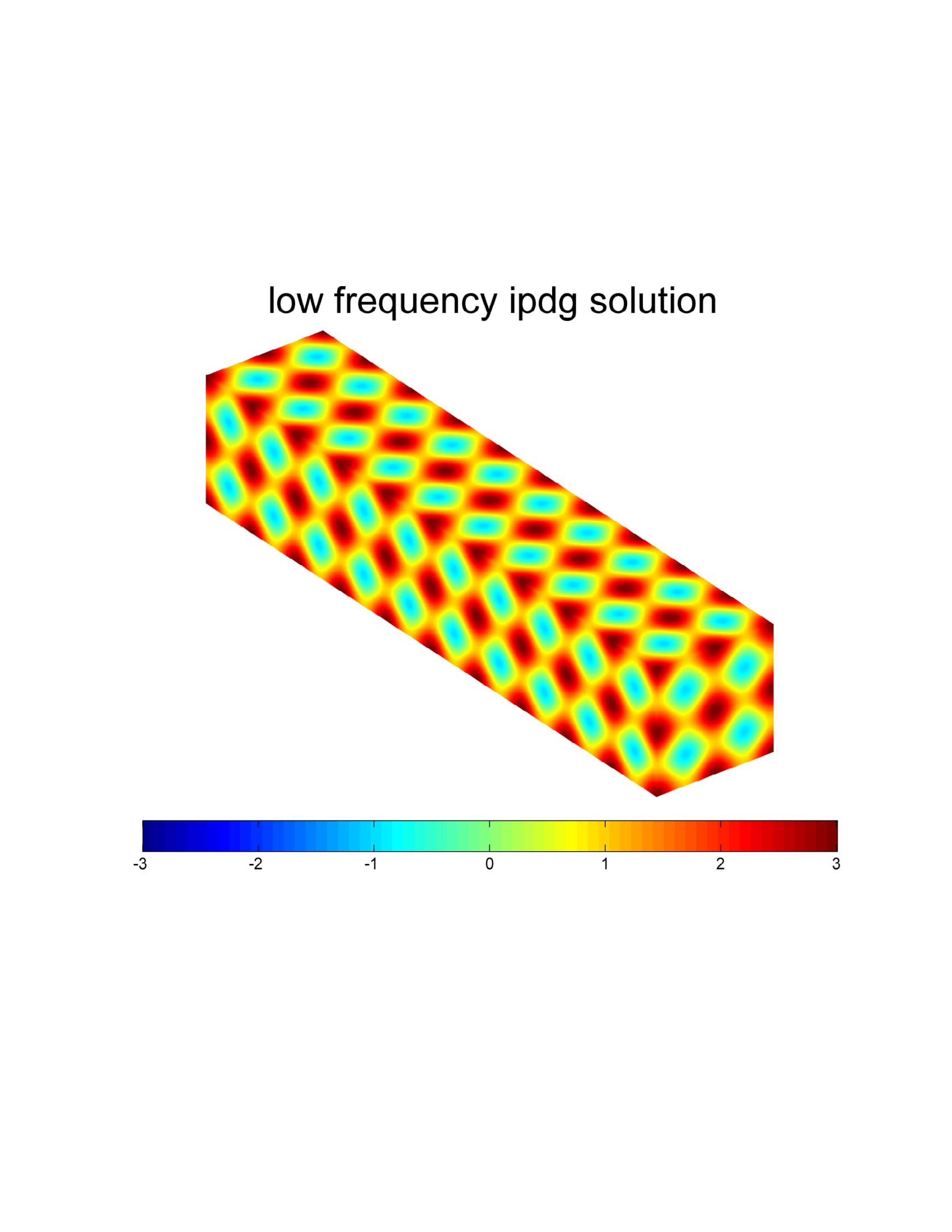}
%	\includegraphics[trim={1cm 6cm 1cm 6cm},clip,width=3in]{hel_nn/ex1_low}
%\includegraphics[trim={1cm 6cm 1cm 6cm},clip,width=3in]{hel_nn/ex1_lowb}
	%	\label{fig:}
	\end{minipage}%
	\quad
	\begin{minipage}{.5\textwidth}
%\end{figure}
%\begin{figure}[!htb]
	\centering
	\includegraphics[width=.9\linewidth]{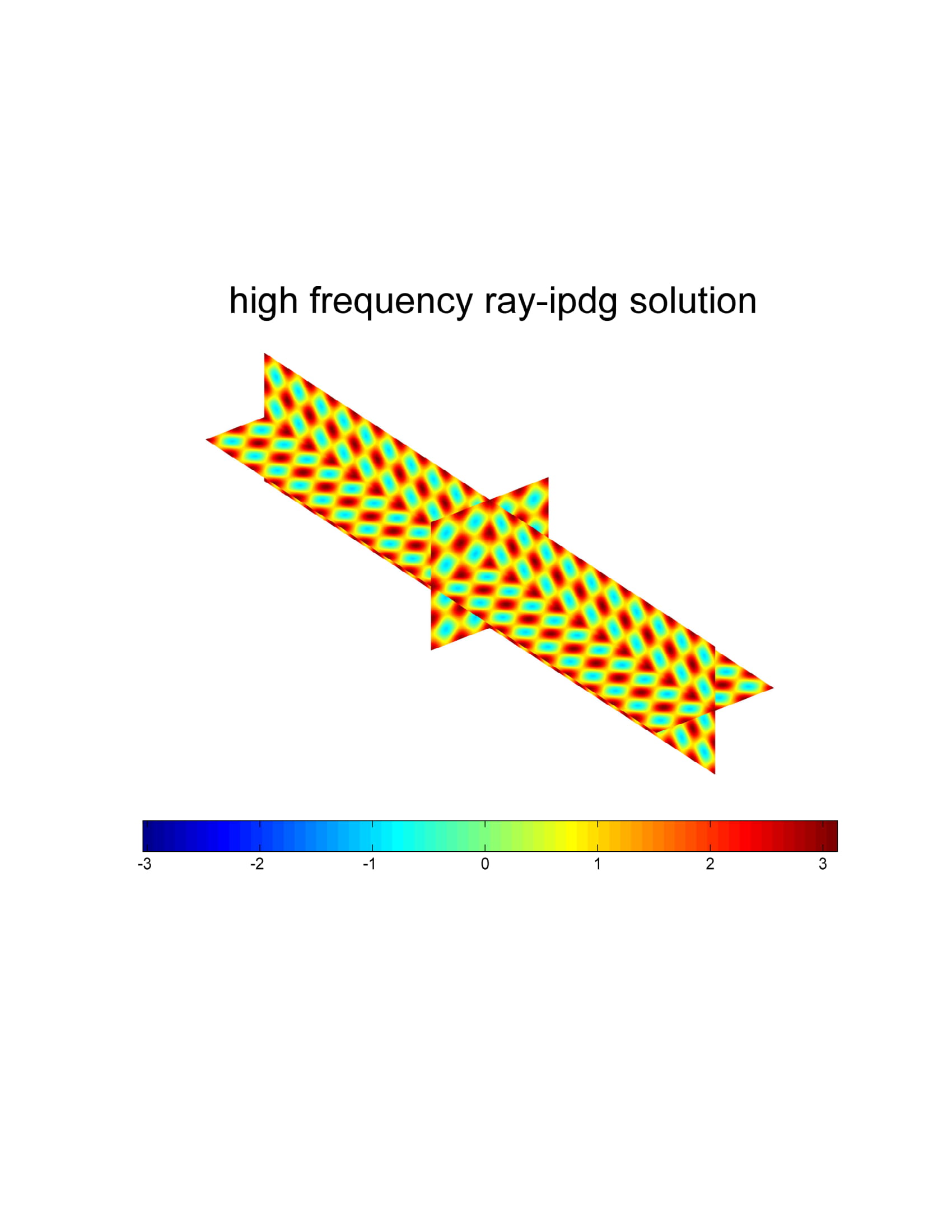}\\
\includegraphics[width=.9\linewidth]{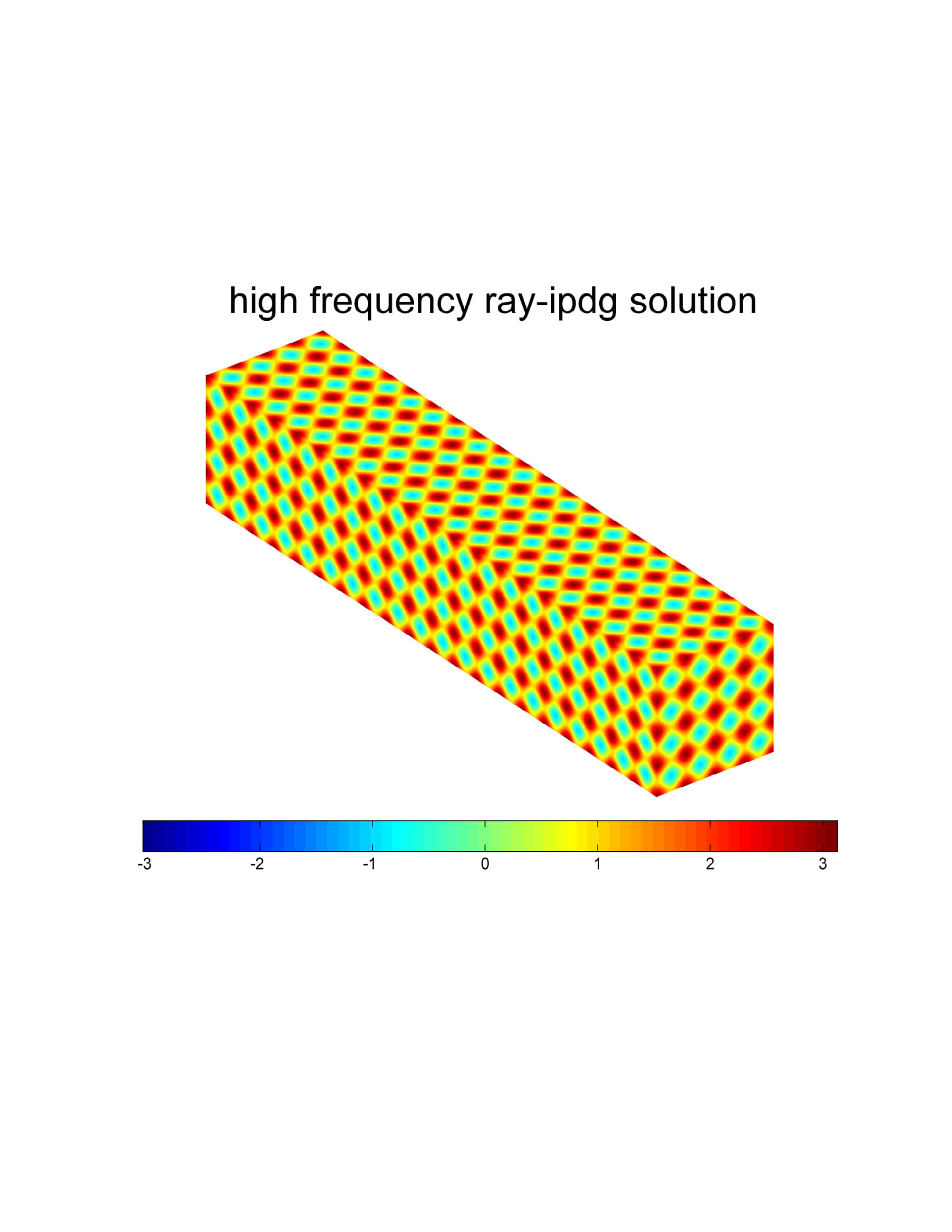}
	%	\label{fig:}
	\end{minipage}%
	\caption{3D example 2, $L^2$ relative error is 0.0252.}
	\label{fig:u9}
\end{figure}
	
	%\clearpage
%\newpage
For the third numerical example, we will take the wave field as
\begin{align*}
u_{10}=\sqrt{\omega} H_{0}^{(1)}\left(\omega\left|\mathrm{x}-\mathrm{x}_{0}\right|\right)
\end{align*}
where $\mathrm{x}_{0}=(2,2,2)$,  and the wave speed $c=1$. The domain and mesh are the same as the previous example.
%The domain is $ [0,1] \times [0,0.2] \times [0,0.2] $ and divided into $20\times 4\times 4$ small regions. Let $x_K,H$ be the centre points and grid size of coarse grid $K$. For each region $K$, we use two center point $x_K\pm(H/4,H/4,H/4)$ to find one directions by neural network model, which will compute the basis. 
For this example, we take two points in each element, and our neural network will learn one ray direction in each of these two points. 
Figure \ref{fig:u10} shows the reduced frequency IPDG solution and high frequency ray-IPDG solution. The relative error of our approximate solution is 0.020104.

\begin{figure}[!htb]
\begin{minipage}{.5\textwidth}
	\centering
	\includegraphics[width=.9\linewidth]{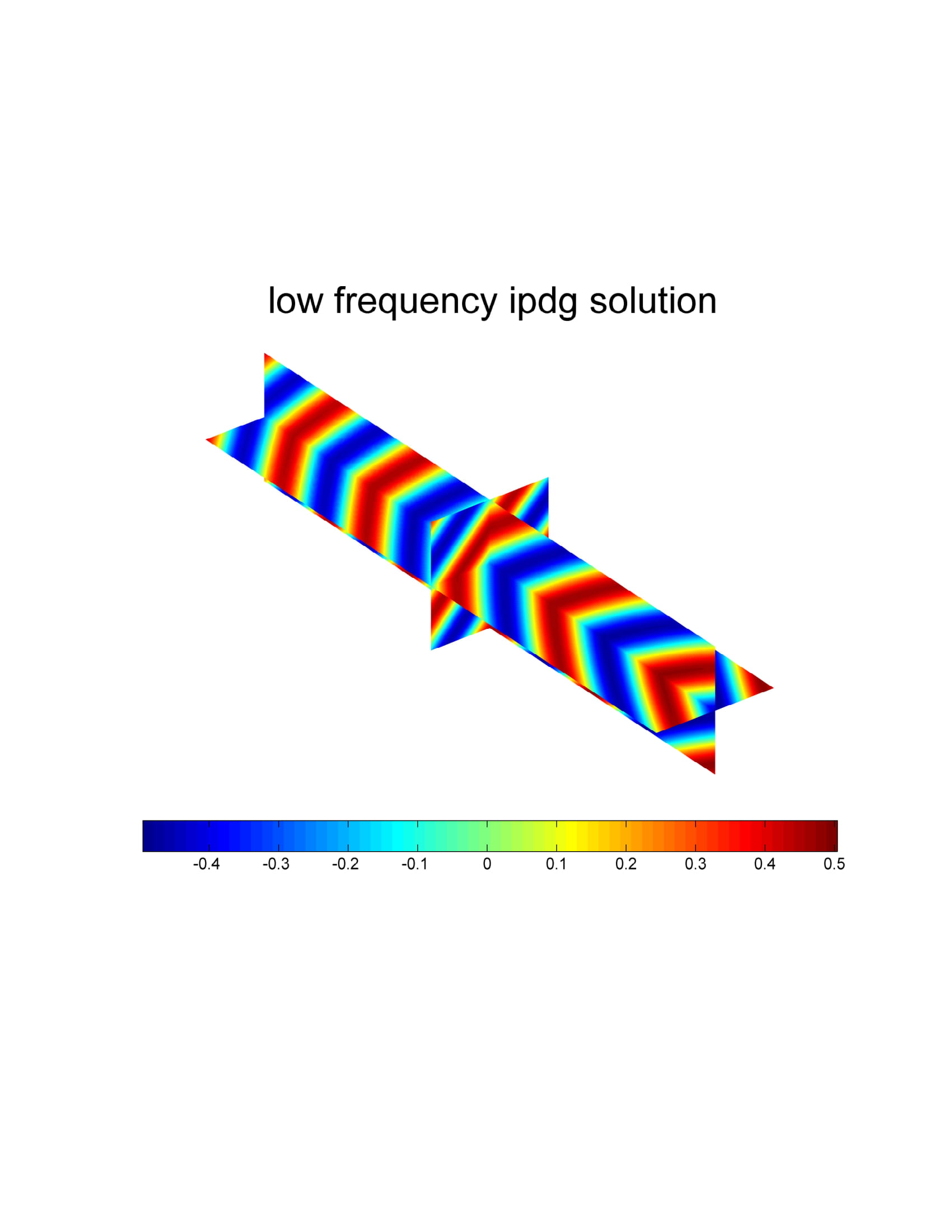}\\
	\includegraphics[width=.9\linewidth]{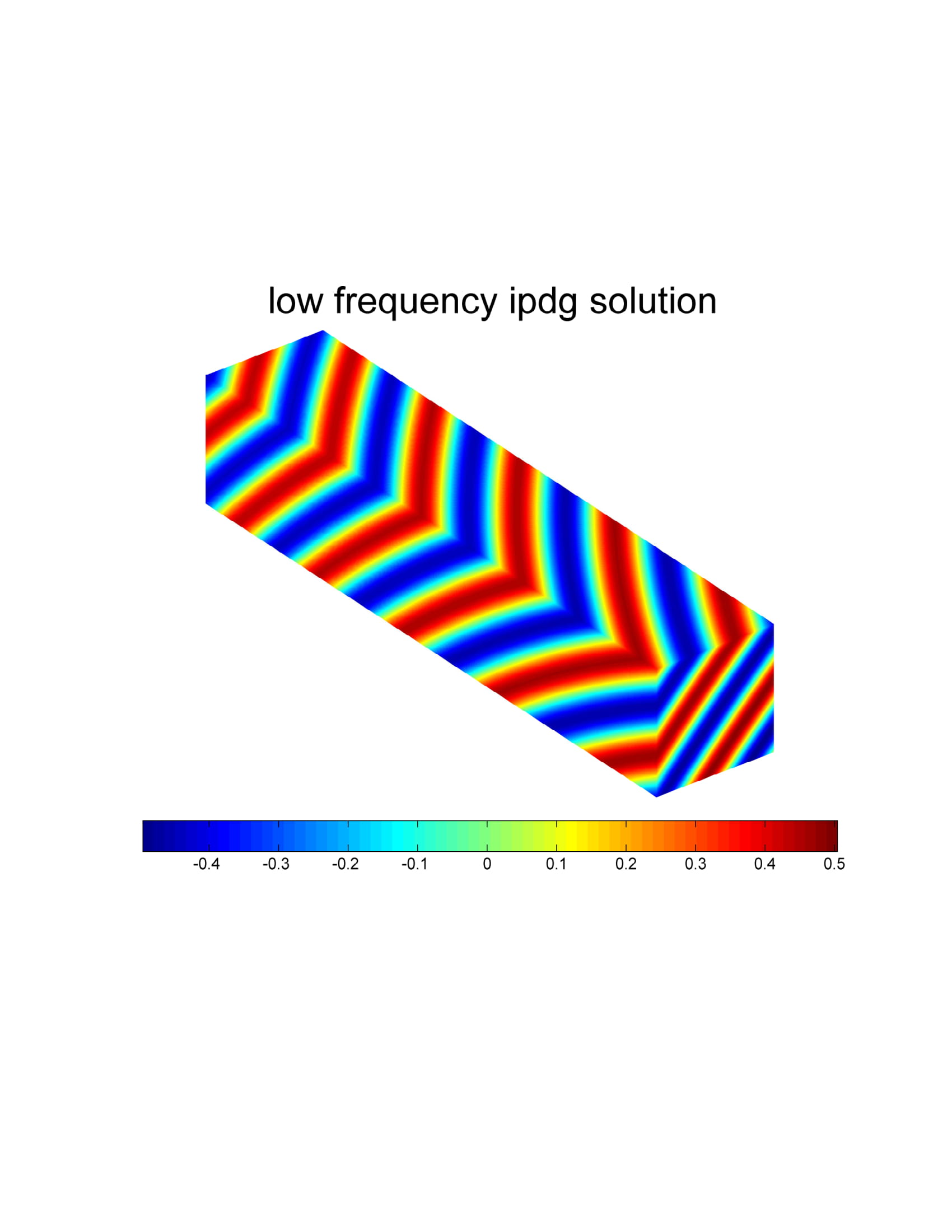}
%	\includegraphics[trim={1cm 6cm 1cm 6cm},clip,width=3in]{hel_nn/ex1_low}
%\includegraphics[trim={1cm 6cm 1cm 6cm},clip,width=3in]{hel_nn/ex1_lowb}
	%	\label{fig:}
	\end{minipage}%
	\quad
	\begin{minipage}{.5\textwidth}
%\end{figure}
%\begin{figure}[!htb]
	\centering
	\includegraphics[width=.9\linewidth]{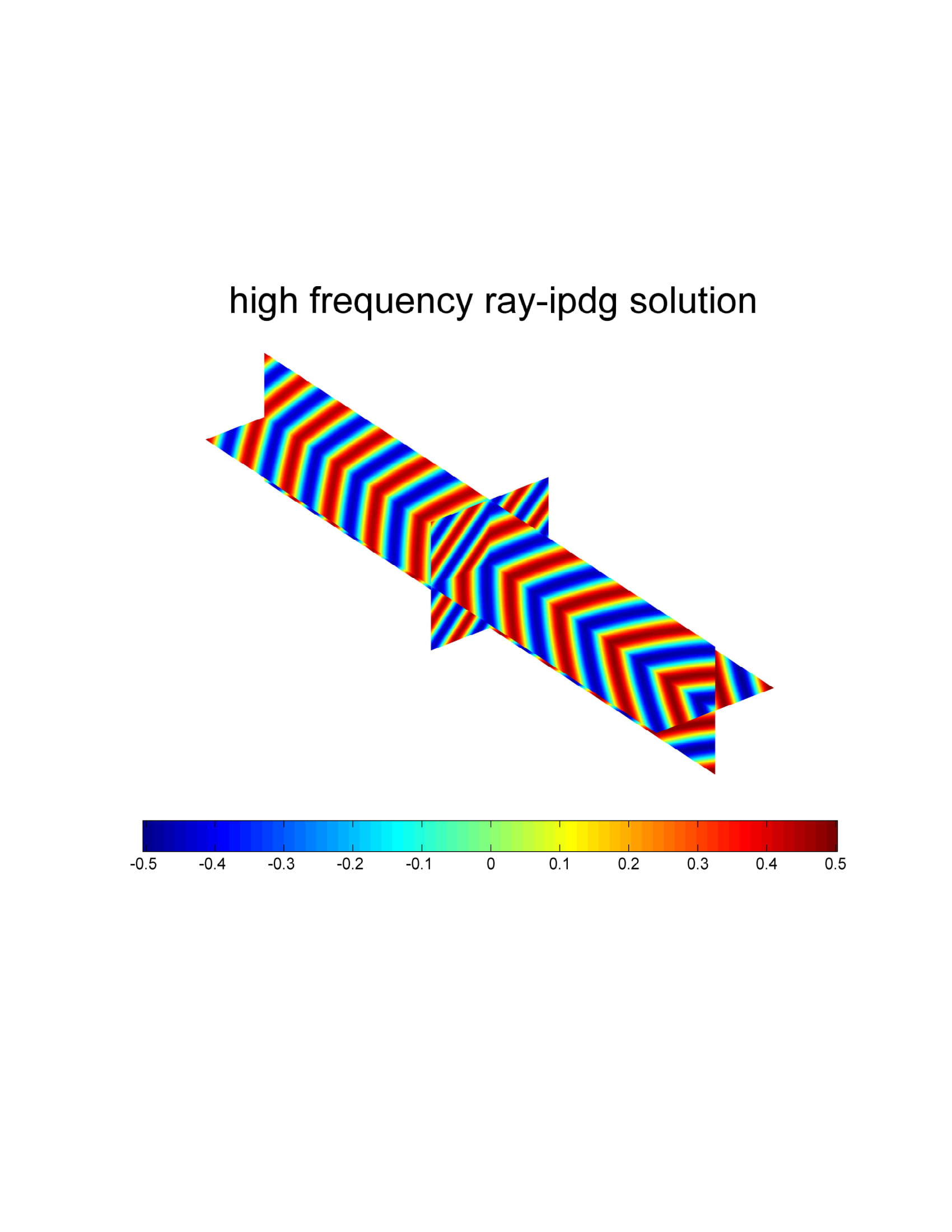}\\
\includegraphics[width=.9\linewidth]{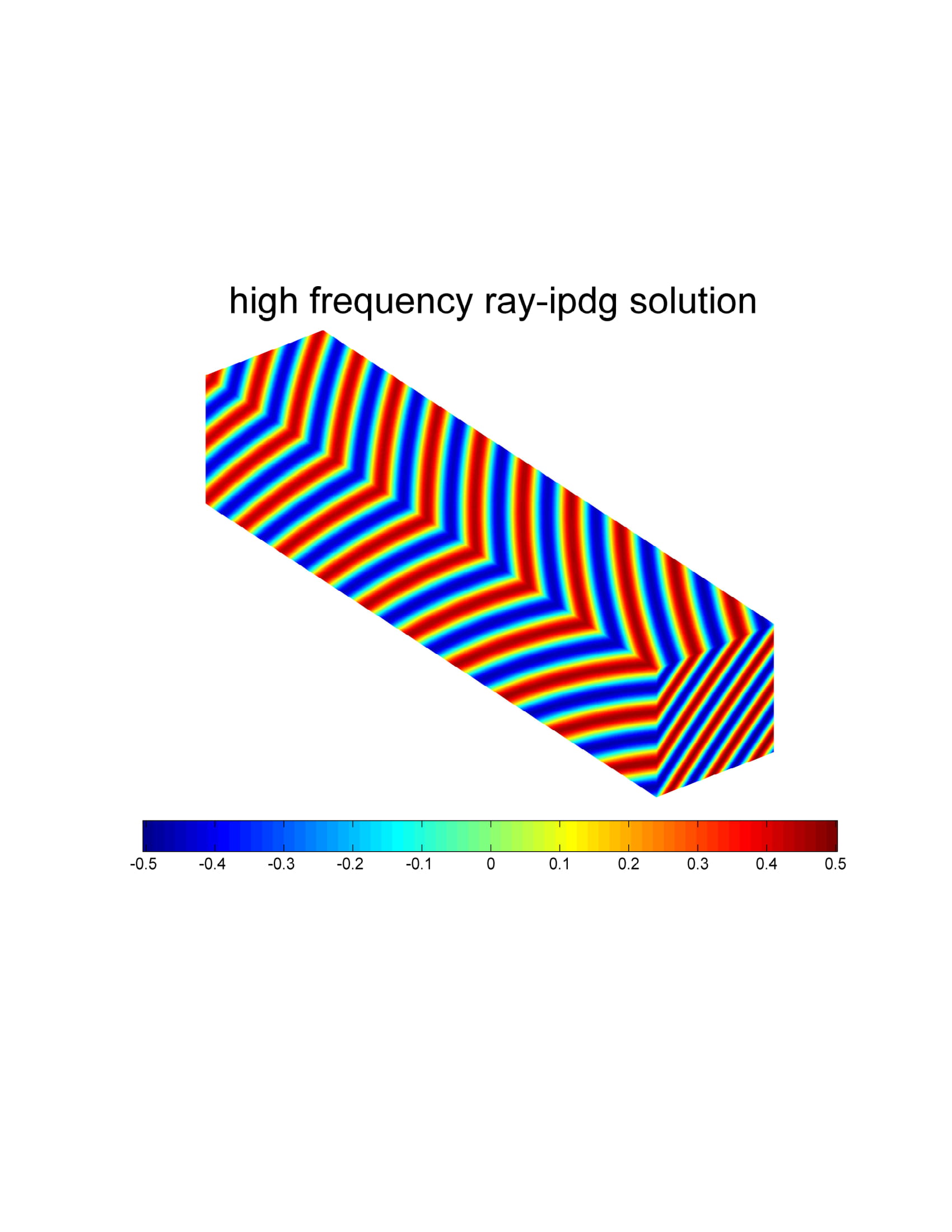}
	%	\label{fig:}
	\end{minipage}%
		\caption{3D example 3, $L^2$ relative error is 0.020104.}
		\label{fig:u10}
\end{figure}	

	%\clearpage
%\newpage
For the fourth 3D numerical example, we will take the wave field as
\begin{align*}
u_{11}=\sqrt{\omega} H_{0}^{(1)}\left(\omega\left|\mathrm{x}-\mathrm{x}_{0,1}\right|\right)+0.5\sqrt{\omega} H_{0}^{(1)}\left(\omega\left|\mathrm{x}-\mathrm{x}_{0,2}\right|\right)
\end{align*}
where $\mathrm{x}_{0,1}=(2,2,2)$ and $\mathrm{x}_{0,2}=(-0.5,-0.5,2])$. We also let the wave speed $c=1$. The domain and mesh are the same as the previous example.
%The domain is $ [0,1] \times [0,0.2] \times [0,0.2] $ and divided into $20\times 4\times 4$ small regions. Let $x_K,H$ be the centre points and grid size of coarse grid $K$. For each region $K$, we use two center point $x_K\pm(H/4,H/4,H/4)$ to find two directions by neural network model, which will compute the basis. 
For this example, we take two points in each element, and our neural network will learn two ray directions in each of these two points. Figure \ref{fig:u11} shows the reduced frequency IPDG solution and the high frequency ray-IPDG solution. The relative error of our approximate solution is 0.024663.
	
\begin{figure}[!htb]
\begin{minipage}{.5\textwidth}
	\centering
	\includegraphics[width=.9\linewidth]{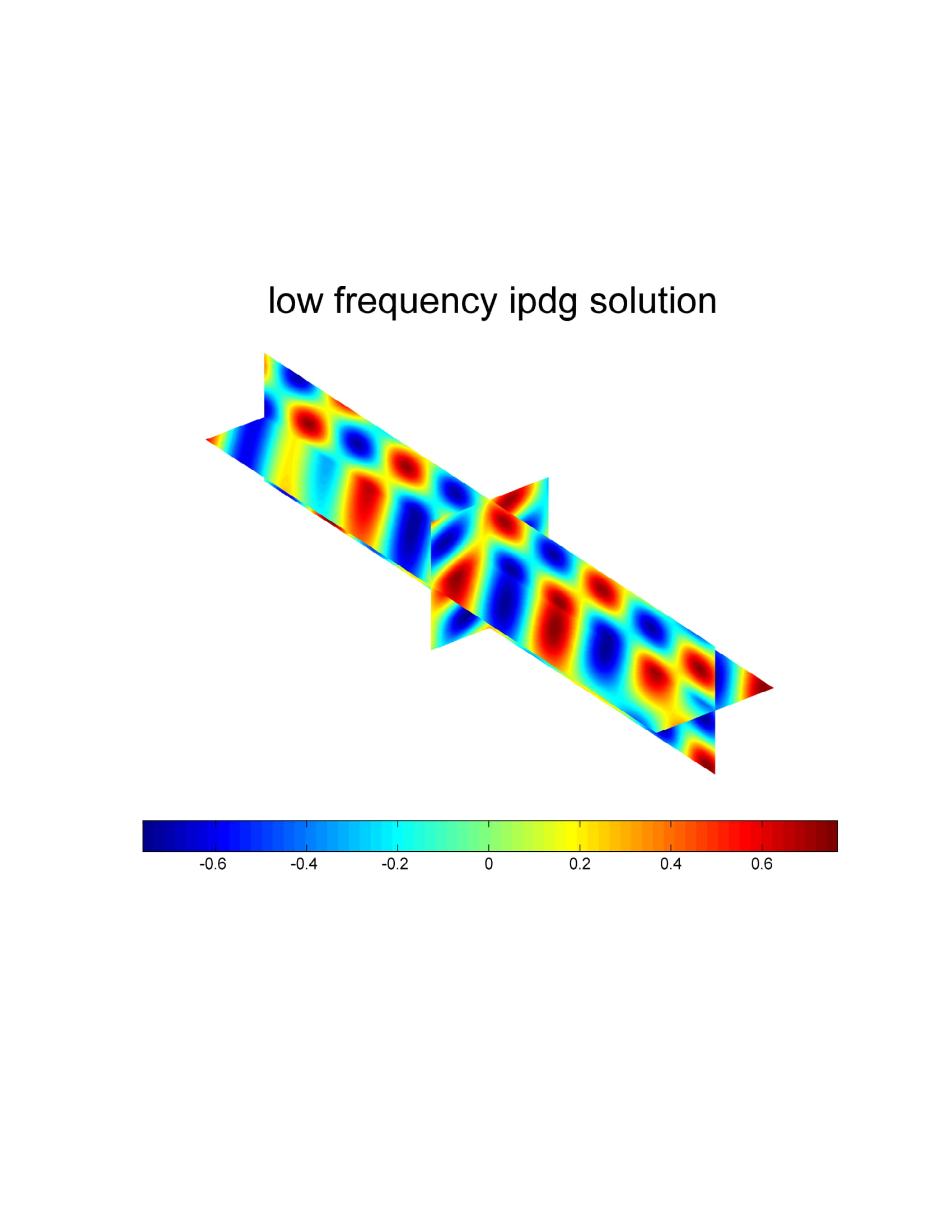}\\
	\includegraphics[width=.9\linewidth]{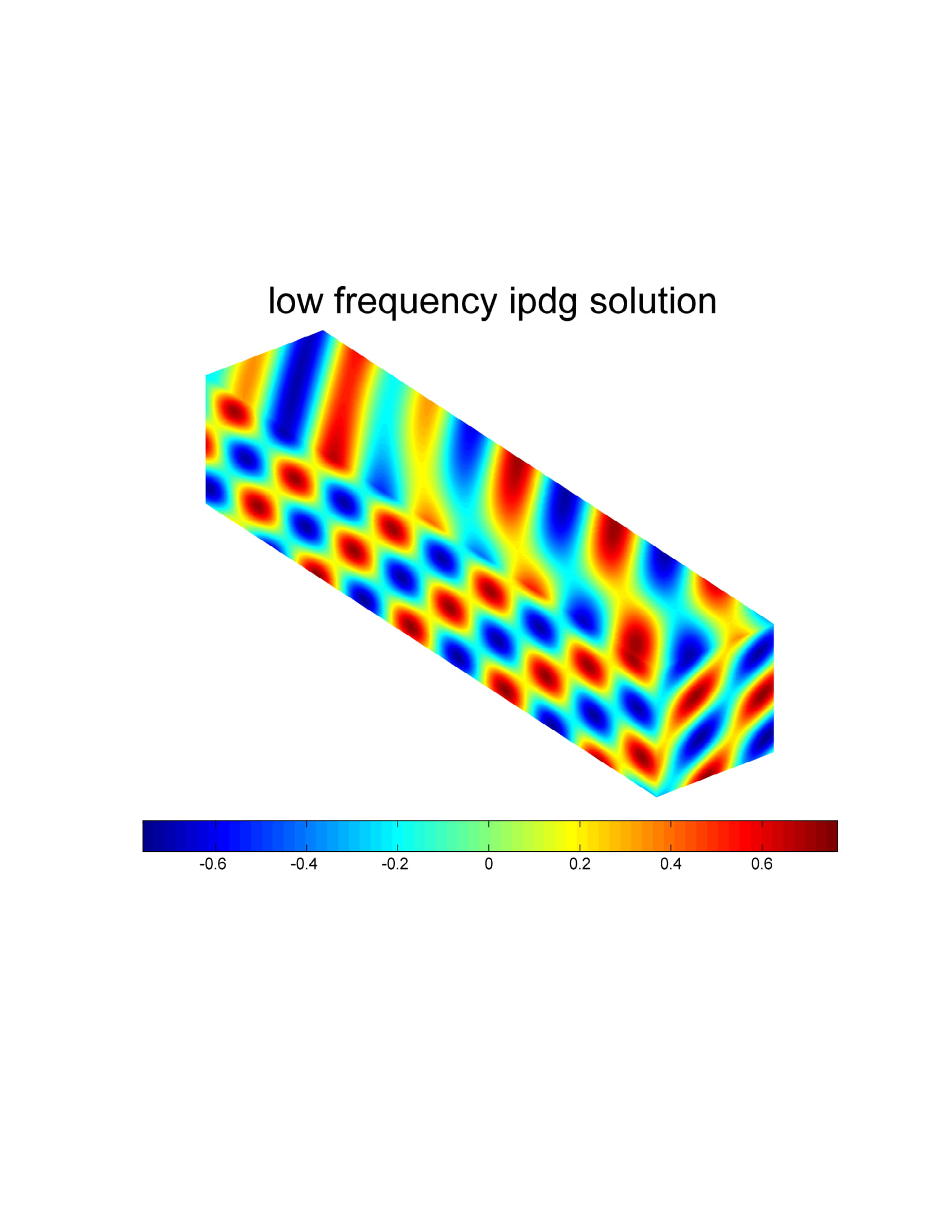}
%	\includegraphics[trim={1cm 6cm 1cm 6cm},clip,width=3in]{hel_nn/ex1_low}
%\includegraphics[trim={1cm 6cm 1cm 6cm},clip,width=3in]{hel_nn/ex1_lowb}
	%	\label{fig:}
	\end{minipage}%
	\quad
	\begin{minipage}{.5\textwidth}
%\end{figure}
%\begin{figure}[!htb]
	\centering
	\includegraphics[width=.9\linewidth]{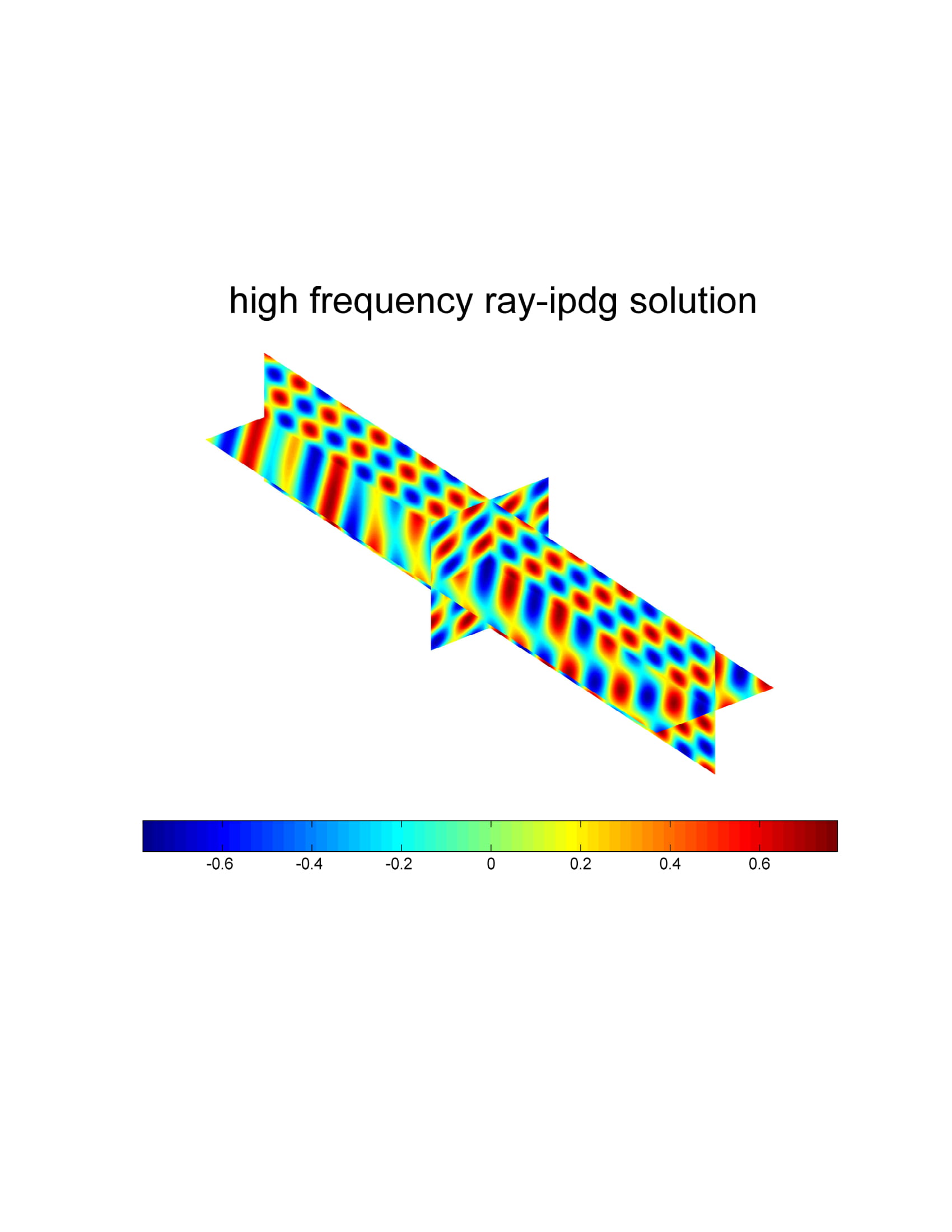}\\
\includegraphics[width=.9\linewidth]{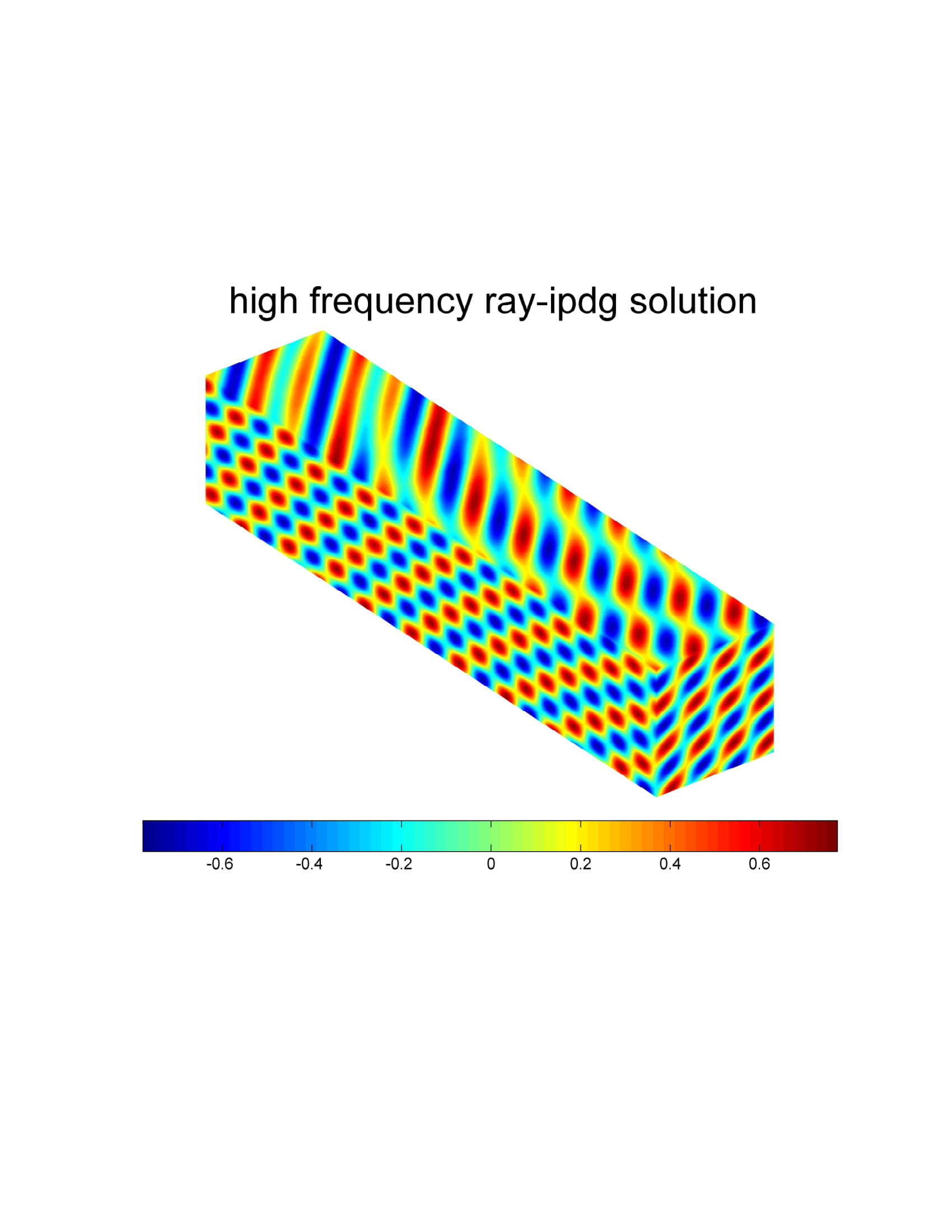}
	%	\label{fig:}
	
	\end{minipage}%
	\caption{3D example 4, $L^2$ relative error is 0.024663.}
	\label{fig:u11}
\end{figure}	

\section{Conclusion}
We have developed a deep learning approach to extract ray directions at discrete locations by analyzing highly oscillatory wave fields. A deep neural network is trained on a set of local plane-wave fields to predict ray directions at discrete locations. The resulting deep neural network is then applied to a reduced-frequency Helmholtz solution to extract the directions, which are further incorporated into a ray-based interior-penalty discontinuous Galerkin (IPDG) method to solve the Helmholtz equations at higher frequencies. In this way, we observe no apparent pollution effects in the resulting Helmholtz solutions in inhomogeneous media. Numerical results show that the proposed scheme is very efficient and yields highly accurate solutions.

\section*{Acknowledgement}
The research of Eric Chung is partially supported by the Hong
 Kong RGC General Research Fund (Project numbers 14304719 and 14302620)
 and CUHK Faculty of Science Direct Grant 2020-21.

\newpage
\clearpage
\bibliographystyle{plain}
\bibliography{references0,myref}

\end{document}